\providecommand{\U}[1]{\protect\rule{.1in}{.1in}}
\newtheorem{theorem}{Theorem}[section]
\newtheorem{corollary}[theorem]{Corollary}
\newtheorem{example}[theorem]{Example}
\newtheorem{lemma}[theorem]{Lemma}
\newtheorem{proposition}[theorem]{Proposition}
\newenvironment{proof}[1][Proof]{\noindent\textbf{#1.} }{\ \rule{0.5em}{0.5em}}
\begin{document}

\title{The Borel-Weil theorem for reductive Lie groups }
\author{Jos\'{e} Araujo\\Facultad de Ciencias Exactas, UNICEN \\Tandil, Argentina.
\and Tim Bratten\\Facultad de Ciencias Exactas, UNICEN\\Tandil, Argentina. }
\date{}
\maketitle

\begin{abstract}
In this manuscript we consider the extent to which an irreducible
representation for a reductive Lie group can be realized as the sheaf
cohomolgy of an equivariant holomorphic line bundle defined on an open
invariant submanifold of a complex flag space. Our main result is the
following: suppose $G_{0}$ is a real reductive group of Harish-Chandra class
and let $X$ be the associated full complex flag space. Suppose $\mathcal{O}%
_{\lambda}$ is the sheaf of sections of a $G_{0}$-equivariant holomorphic line
bundle on $X$ whose parameter $\lambda$ (in the usual twisted $\mathcal{D}%
$-module context) is antidominant and regular. Let $S\subseteq X$ be a $G_{0}%
$-orbit and suppose $U\supseteq S$ is the smallest $G_{0}$-invariant open
submanifold of $X$ that contains $S$. From the analytic localization theory of
Hecht and Taylor one knows that there is a nonegative integer $q$ such that
the compactly supported sheaf cohomology groups $H_{\text{c}}^{p}%
(S,\mathcal{O}_{\lambda}\mid_{S})$ vanish except in degree $q$, in which case
$H_{\text{c}}^{q}(S,\mathcal{O}_{\lambda}\mid_{S})$ is the minimal
globalization of an associated standard Beilinson-Bernstein module. In this
study we show that the $q$-th compactly supported cohomolgy group
$H_{\text{c}}^{q}(U,\mathcal{O}_{\lambda}\mid_{U})$ defines, in a natural way,
a nonzero submodule of $H_{\text{c}}^{q}(S,\mathcal{O}_{\lambda}\mid_{S})$,
which is irreducible (i.e. realizes the unique irreducible submodule of
$H_{\text{c}}^{q}(S,\mathcal{O}_{\lambda}\mid_{S})$) when an associated
algebraic variety is nonsingular. By a tensoring argument, we can show that
the result holds, more generally (for the nonsingular case), when the
representation $H_{\text{c}}^{q}(S,\mathcal{O}_{\lambda}\mid_{S})$ is what we
call a classifying module.

\end{abstract}

\section{Introduction}

In this manuscript we show there is a natural generalization of the Borel-Weil
theorem to the class of reductive Lie groups which serves to realize many, but
not all, irreducible admissible representations.

Starting with Schmid's thesis \cite{schmid1}, there are general results
realizing irreducible representations as sheaf cohomologies of finite-rank
holomorphic vector bundles defined over open orbits in generalized complex
flag spaces \cite{wong}, \cite{bratten1}. However, relatively few irreducible
representations can be realized this way. The equivariant $\mathcal{D}$-module
theory of Beilinson and Bernstein \cite{beilinson} provides a powerful
generalization to the Borel-Weil theorem and produces geometric realizations
for any irreducible Harish-Chandra module. However, one would also like to
find a natural realization of a corresponding group representation. In a
general sense, the analytic localization defined by Hecht and Taylor
\cite{hecht} does just that, by giving realizations for the minimal
globalizations \cite{schmid2} of Harish-Chandra modules. Along these lines,
the theory of analytic localization was used by Hecht and Taylor to realize
minimal globalizations of the standard modules defined by the
Beilinson-Bernstein theory. In many cases standard modules are irreducible,
but when they are not it is not obvious how to proceed. One difficulty is that
the geometric realization defined by Hecht and Taylor is obtained via an
equivalence of derived categories so that the analytic localization of an
irreducible representation can (and sometimes does) appear as a complex of
sheaves that has nonzero homologies in various degrees. In spite of this
difficulty, it turns out (somewhat surprisingly to us) that the theory of
analytic localization can be used to realize many more irreducible
representations than the example of irreducible standard modules. In the end,
one sees that the key point hinges on whether a certain associated algebraic
variety has singularities. When it does not, then the Beilinson-Bernstein
realization of the corresponding irreducible Harish-Chandra module has a
simple geometric description and this fact controls the analytic localization.
When the associated variety is singular, this simplicity breaks down and it
turns out it is impossible to realize the irreducible representation as the
sheaf cohomology of a finite-rank holomorphic vector bundle defined over an
invariant open submanifold of a generalized flag manifold.

Rather than make a general statement about our main results in the
introduction (our main results are Theorem 5.1 and Corollary 6.2) we would
like to illustrate how the theory works in the context of a connected complex
reductive group where the relationship to the Beilinson-Bernstein
classification of irreducible admissible representations is more transparent.
In particular, suppose $G_{0}$ is a connected complex reductive group with Lie
algebra $\mathfrak{g}_{0}$ and let $K_{0}\subseteq G_{0}$ be a compact real
form. Associated to $K_{0}$ is a corresponding Cartan involution $\theta
:G_{0}\rightarrow G_{0}$ (in this case $\theta$ is the conjugation given by
the real form). Let $X_{0}$ be the complex flag manifold of Borel subgroups of
$G_{0}$ and let $X_{0}^{\text{c}}$ be the conjugate complex manifold. Then the
flag manifold $X$ of Borel subalgebras of the complexified Lie algebra
$\mathfrak{g}$ of $\mathfrak{g}_{0}$ can be identified with the direct
product
\[
X=X_{0}\times X_{0}^{\text{c}}.
\]
We need to consider two actions of $G_{0}$ on $X$. The diagonal action
\[
g\cdot(x,y)=(gx,gy)
\]
corresponds to the fact that $G_{0}$ is a real group with real Lie algebra
$\mathfrak{g}_{0}$ and the action
\[
g\cdot(x,y)=(gx,\theta(g)y)
\]
corresponds to the fact that $G_{0}=K$ is the complexification of $K_{0}$.
Choose a $\theta$-stable Cartan subgroup $H_{0}\subseteq G_{0}$ and a Borel
subgroup $B_{0}\supseteq H_{0}$. Let $W(G_{0})$ be the Weyl group of $H_{0}$
in $G_{0}$. Then we can identify the set of Borel subgroups of $G_{0}$ that
contain $H_{0}$ with $W(G_{0})$ (the identity in $W(G_{0})$ corresponds with
the Borel subgroup $B_{0}$). Let $B_{0}^{\text{op}}$ be the Borel subgroup
opposite to $B_{0}$ with respect to $H_{0}$ (this subgroup corresponds to the
longest element in $W(G_{0})$). Then each $G_{0}$-orbit and each $K$-orbit on
$X$ contains exactly one point of the form
\[
\left(  w\cdot B_{0},B_{0}^{\text{op}}\right)  \in X_{0}\times X_{0}%
^{\text{c}}%
\]
for $w\in W(G_{0})$. Thus the orbits for both actions are simultaneously
parametrized by $W(G_{0})$. Observe that the open orbit for the $G_{0}$-action
and the closed orbit for the $K$-action correspond to the identity in
$W(G_{0})$ We introduce the length function, $l(w)$, on $W(G_{0})$. In
particular, each element $w\in W(G_{0})$ can be expressed as a product of
simple reflections and the corresponding length, $l(w)$, is defined to be the
the number of simple reflections that appear in a minimal expression (i.e. a
reduced word) for $w$. Observe that if $Q_{w}$ is the $K$-orbit corresponding
to $w\in W(G_{0})$ then the complex dimension of $Q_{w}$ is given by
\[
\dim_{\mathbb{C}}(Q_{w})=\dim_{\mathbb{C}}(X_{0})+l(w).
\]

For simplicity we will consider the sheaf of holomorphic functions
$\mathcal{O}_{X}$ on $X$ (more generally one could consider the sheaf of
sections $\mathcal{O}_{\lambda}$ of a $G_{0}$-equivariant holomorphic line
bundle on $X$ whose parameter $\lambda$ in the usual twisted $\mathcal{D}%
$-module context is antidominant and regular). The Beilinson-Bernstein
classification gives a one-to-one correspondence between the equivalence
classes of irreducible admissible representations for $G_{0}$ that have the
same infinitesimal character as the trivial representation and the $G_{0}%
$-orbits on $X$ given in the following way. For $w\in W(G_{0})$ let $S_{w}$ be
the corresponding $G_{0}$-orbit and define
\[
q=\dim_{\boldsymbol{C}}(X_{0})-l(w)\text{.}%
\]
Thus $q$ is the (complex) codimension of the $K$-orbit $Q_{w}$ in $X$. Using
their theory of analytic localization, Hecht and Taylor have shown that the
compactly supported sheaf cohomologies
\[
H_{\text{c}}^{p}(S_{w},\mathcal{O}_{X}\mid_{S_{w}}\mathcal{)}%
\]
of the restriction of $\mathcal{O}_{X}$ to $S_{w}$ vanish except when $p=q$ in
which case $H_{\text{c}}^{q}(S_{w},\mathcal{O}_{X}\mid_{S_{w}}\mathcal{)}$ is
the minimal globalization of a corresponding standard Beilinson-Bernstein
module. It follows that $H_{\text{c}}^{q}(S_{w},\mathcal{O}_{X}\mid_{S_{w}%
}\mathcal{)}$ has a unique irreducible submodule $J_{w}\subseteq H_{\text{c}%
}^{q}(S_{w},\mathcal{O}_{X}\mid_{S_{w}}\mathcal{)}$. These representations
$J_{w}$ for $w\in W(G_{0})$ are exactly the irreducible admissible
representations for $G_{0}$ that have the same infinitesimal character as the
trivial representation. In this manuscript we want to realize the
representations $J_{w}$. Along those lines we introduce the Bruhat order in
$W(G_{0})$ : if $w,u\in W(G_{0})$ then we write $u\preceq w$ if $u$ is an
ordered subword that occurs in a reduced expression for $w$ in terms of
products of simple reflections. Given $w\in W(G_{0}),$ it is well-known that
the Bruhat interval $\Upsilon(w)=\left\{  u\in W(G_{0}):u\leq w\right\}  $
characterizes the Zariski closure of the $K$-orbit $Q_{w}$ in the following
way: \
\[
\overline{Q_{w}}=%
{\displaystyle\bigcup\limits_{u\in \Upsilon(w)}}
Q_{u}.
\]
We call $\overline{Q_{w}}$ \emph{the algebraic variety associated to the
}$G_{0}$-\emph{orbit} $S_{w}$. Define
\[
U_{w}=%
{\displaystyle\bigcup\limits_{u\in \Upsilon(w)}}
S_{u}.
\]
Then $U_{w}$ is the smallest $G_{0}$-invariant open submanifold of $X$ that
contains $S_{w}$ and it is not hard to show that $S_{w}$ is the unique $G_{0}%
$-orbit that is closed in $U_{w}$. Put $U=U_{w}-S_{w}$. Letting $\left(
\mathcal{O}_{X}\mid_{U}\right)  ^{X}$, etc., denote the extension by zero of
the restriction of $\mathcal{O}_{X}$ to $U$, we obtain the following short
exact sequence of sheaves on $X$
\[
0\rightarrow\left(  \mathcal{O}_{X}\mid_{U}\right)  ^{X}\rightarrow\left(
\mathcal{O}_{X}\mid_{U_{w}}\right)  ^{X}\rightarrow\left(  \mathcal{O}_{X}%
\mid_{S_{w}}\right)  ^{X}\rightarrow0.
\]
Using an argument like \cite[Lemma 3.3]{bratten2} it is not hard to show that
\[
H_{\text{c}}^{p}(U,\mathcal{O}_{X}\mid_{U}\mathcal{)}=0\text{ if }p<q+1
\]
and that
\[
H_{\text{c}}^{q}(U_{w},\mathcal{O}_{X}\mid_{U_{w}}\mathcal{)}%
\]
is a nonzero minimal globalization. Thus the long exact sequence in sheaf
cohomology determines an inclusion
\[
H_{\text{c}}^{q}(U_{w},\mathcal{O}_{X}\mid_{U_{w}}\mathcal{)}\rightarrow
H_{\text{c}}^{q}(S_{w},\mathcal{O}_{X}\mid_{S_{w}}\mathcal{)}.
\]
We note that when $U_{w}$ is the preimage of an open $G_{0}$-orbit on a
generalized flag space $Y$ then there is a natural identification of the
representation $H_{\text{c}}^{q}(U_{w},\mathcal{O}_{\mid U_{w}}\mathcal{)}$
with the $q$-th compactly supported cohomology of the holomorphic functions on
the given open orbit in $Y$ (this is one of the key points in \cite{bratten2}%
). When this happens, it is known that the representation $H_{\text{c}}%
^{q}(U_{w},\mathcal{O}_{X}\mid_{U_{w}}\mathcal{)}$ is irreducible. We say that
$U_{w}$ is \emph{parabolic }when $U_{w}$ is the preimage of an open $G_{0}%
$-orbit on a generalized flag space $Y$. Our main result in this study shows
that (more generally) the submodule $H_{\text{c}}^{q}(U_{w},\mathcal{O}%
_{X}\mid_{U_{w}}\mathcal{)}$ is irreducible when the associated algebraic
variety $\overline{Q_{w}}$ is smooth. Thus we can realize $J_{w}$ as
$H_{\text{c}}^{q}(U_{w},\mathcal{O}_{X}\mid_{U_{w}}\mathcal{)}$ when (and, in
fact, only when) this happens. For example when $G_{0}$ is the complex general
linear group $GL(3,\mathbb{C)}$ then 4 out of the 6 $G_{0}$-orbits on $X$ are
parabolic but the algebraic varieties associated to all 6 orbits are smooth so
we can realize all irreducible representations with the given infinitesimal
character in this case. When $G_{0}=GL(4,\mathbb{C})$ then only 8 of the 24
$G_{0}$-orbits are parabolic but 22 out of 24 orbits have smooth associated
varieties so we can realize all but two of the irreducible representations
with the given infinitesimal character (and so on). We will also see (for some
examples) that when the algebraic variety $\overline{Q_{w}}$ is singular then
the representation $H_{\text{c}}^{q}(U_{w},\mathcal{O}_{X}\mid_{U_{w}%
}\mathcal{)}$ is reducible and it is actually impossible to realize the
irreducible representation $J_{w}$ as the compactly sheaf cohomology of an
equivariant (finite-rank) holomorphic vector bundle defined on a $G_{0}%
$-invariant open submanifold in a generalized flag space.

Our manuscript is organized as follows. In the second section we will present
the main results we need about orbits and invariant subspaces in $X$. In the
third section we will introduce the equivariant homogeneous line bundles and
prove the basic embedding theorem. In the fourth section we introduce the
algebraic localization theory and give a geometric description to the
irreducible Harish-Chandra module in the Beilinson-Bernstein classification
assuming the corresponding algebraic variety is smooth. In the fifth section
we introduce the analytic localization and use the comparison theorem to prove
our main result. In the last section we use a tensoring argument to extend our
result to antidominant parameters and also consider how our construction
relates to the classical parabolic induction (in the case of a complex
reductive group) so we can consider some examples. We conclude our manuscript
with a brief consideration of how Serre duality applies. We would like to
mention that the idea of our proof involves a mix of ideas from the two
articles \cite{bratten2} and \cite{bratten3}. Although our argument requires a
heavy use of the $D$-module theory and some familiarity with derived
categories, we would hope it looks natural to anyone familiar with these two
previous articles.

The authors would like to take this moment to acknowledge the importance of
the work by Hecht and Taylor for these results. We would also like to thank
David Vogan and Annegret Paul for being kind enough to help us see how the
representation we are studying fails to be irreducible when the associated
variety is singular.

\section{$G_{0}$-orbits and $K$-orbits}

Throughout this manuscript $G_{0}$ will denote a real reductive Lie group of
Harish-Chandra class with Lie algebra $\mathfrak{g}_{0}$ and complexified Lie
algebra $\mathfrak{g}$. Abusing notation a bit, we let $G$ denote the complex
adjoint group of $\mathfrak{g}$ (note that $G$ has Lie algebra $\left[
\mathfrak{g},\mathfrak{g}\right]  $). There is a natural morphism of Lie
groups
\[
G_{0}\rightarrow G\text{.}%
\]
We also fix a maximal compact subgroup $K_{0}\subseteq G_{0}$ and let $K$ be
the complexification of $K_{0}$. Associated to the maximal compact subgroup
there is an involutive automorphism
\[
\theta:G_{0}\rightarrow G_{0}%
\]
whose fixed point set is $K_{0}$. The involution $\theta$ (as well as the
complexification $\theta:\mathfrak{g}\rightarrow\mathfrak{g}$ of its
derivative) is called \emph{the Cartan involution}.

\emph{A Borel subalgebra of} $\mathfrak{g}$ is a maximal solvable subalgebra.
$G$ acts transitively on the set of Borel subalgebras of $\mathfrak{g}$ and
the resulting homogeneous $G$-space $X$ is a complex projective variety called
\emph{the full flag space of} $G_{0}$. One knows that $G_{0}$ has finitely
many orbits on $X$ \cite{wolf}. Therefore the $G_{0}$-orbits are locally
closed submanifolds.

A basic geometric property of flag manifolds that is fundamental to our study
is the existence of a one-to-one correspondence between $G_{0}$-orbits and
$K$-orbits referred to as \emph{Matsuki duality}. For $x\in X$ we let
$\mathfrak{b}_{x}$ denote the corresponding Borel subalgebra of $\mathfrak{g}%
$. Then \emph{the nilradical $\mathfrak{n}_{x}$ of} $\mathfrak{b}_{x}$ is
given by $\mathfrak{n}_{x}=\left[  \mathfrak{b}_{x},\mathfrak{b}_{x}\right]
$. The point $x\in X$ (as well as the Borel subalgebra $\mathfrak{b}_{x}$) is
called \emph{special }if there is a Cartan subalgebra $\mathfrak{c}$ of
$\mathfrak{b}_{x}$ such that
\[
(i)\text{ }\mathfrak{c}_{0}=\mathfrak{g}_{0}\cap\mathfrak{c}\text{ is a real
form of }\mathfrak{c}\text{ and }(ii)\text{ }\theta(\mathfrak{c}%
)=\mathfrak{c}\text{.}%
\]
Then it has been shown that \cite{matsuki1} both the special points in a
$G_{0}$-orbit as well as the special points in a $K$-orbit form a (non-empty)
$K_{0}$-orbit. A $G_{0}$-orbit $S$ and a $K$-orbit $Q$ are said to be
\emph{Matsuki dual }if $S\cap Q$ contains a special point. It follows that
Matsuki duality defines a bijection between the set of $G_{0}$-orbits and the
set of $K$-orbits. When $S$ is a $G_{0}$-orbit and $Q$ is a $K$-orbit then we
will write $S\sim Q$ when $S$ and $Q$ are Matsuki dual.

Given a $K$-orbit $Q$, \emph{the associated algebraic variety} is defined to
be the Zariski closure $\overline{Q}$ of $Q$ (when $S$ is the $G_{0}$-orbit
dual $Q$ then we will also refer to $\overline{Q}$ as the algebraic variety
associated to $S$). There is a partial order, called \emph{the closure order},
defined on set of $K$-orbits by
\[
Q_{1}\preceq Q_{2}\text{ \ if \ }Q_{1}\subseteq\overline{Q_{2}}\text{.}%
\]
While the associated algebraic variety is a closed $K$-invariant subvariety of
$X$ associated to a $G_{0}$-orbit $S$, we will now define a corresponding
$G_{0}$-invariant open submanifold of $X$. In particular, suppose $S_{0}$ is a
$G_{0}$-orbit and let $Q_{0}$ be the Matsuki dual. We define an associated
indexing set $\Upsilon(S_{0})$ of $G_{0}$-orbits in the following way:
\[
S\in \Upsilon(S_{0})\Longleftrightarrow\exists Q\text{ such that }Q\sim S\text{
and }Q\subseteq\overline{Q_{0}}\text{.}%
\]
We define \emph{the corresponding }$G_{0}$-\emph{invariant subspace }$U$ by
\[
U=%
{\displaystyle\bigcup\limits_{S\in \Upsilon(S_{0})}}
S\text{.}%
\]

\begin{proposition}
Maintain the previously introduced notations. Then $U$ is the smallest $G_{0}%
$-invariant open submanifold that contains $S_{0}$ and $S_{0}$ is the unique
closed $G_{0}$-orbit in $U$.
\end{proposition}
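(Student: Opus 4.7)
The plan is to translate the definition of $\Upsilon(S_0)$ into a statement purely about the closure order among $G_0$-orbits and then deduce every claim of the proposition from elementary manipulations with orbit closures. The key fact I will invoke is the closure-reversing property of Matsuki duality: if $S\sim Q$ and $S'\sim Q'$, then $Q'\subseteq\overline{Q}$ if and only if $S\subseteq\overline{S'}$. Applied to the definition, this reformulates $\Upsilon(S_0)$ as the set of $G_0$-orbits $S$ with $S_0\subseteq\overline{S}$. In particular $S_0\in \Upsilon(S_0)$, so $U$ is, by construction, a $G_0$-invariant union of orbits containing $S_0$.

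The one geometric step is openness of $U$, which I would prove by showing that $X\setminus U$ is closed. Using that $G_0$ has only finitely many orbits on $X$, the complement is a finite union of $G_0$-orbits $S'$ with $S_0\not\subseteq\overline{S'}$; any orbit $S''$ appearing in the topological closure of $X\setminus U$ lies in $\overline{S'}$ for some such $S'$, hence $\overline{S''}\subseteq\overline{S'}$, which forces $S_0\not\subseteq\overline{S''}$ and so $S''\in X\setminus U$. Once openness is established, minimality is immediate: any $G_0$-invariant open set $V$ with $S_0\subseteq V$ meets every orbit $S\in \Upsilon(S_0)$ (because $S_0\subseteq\overline{S}$ and $V$ is an open neighborhood of $S_0$), and $G_0$-invariance then forces $S\subseteq V$.

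Uniqueness of $S_0$ as the closed orbit in $U$ follows from the same reformulation. If $S\in \Upsilon(S_0)$ and $S\neq S_0$, then $S_0\subseteq\overline{S}$ shows that $S_0$ lies in the closure of $S$ inside $U$, so $S$ cannot be closed in $U$. Conversely, if some $S\in \Upsilon(S_0)$ were contained in $\overline{S_0}$, then combining $\overline{S}\subseteq\overline{S_0}$ with $S_0\subseteq\overline{S}$ would yield $\overline{S}=\overline{S_0}$; since each $G_0$-orbit is open and dense in its own closure, two distinct $G_0$-orbits cannot share the same closure, and we conclude $S=S_0$. The only real subtlety in the whole argument is invoking the closure-reversal property of Matsuki duality, which I will cite as standard from \cite{matsuki1}.
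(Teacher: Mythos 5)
Your proof is correct and follows essentially the same route as the paper: you rewrite $\Upsilon(S_0)$ via the closure-reversing property of Matsuki duality as $\{S : S_0\subseteq\overline{S}\}$, prove openness by showing the finite union of closures of the complementary orbits stays in the complement, deduce minimality from $G_0$-invariance together with the fact that every open neighborhood of $S_0$ meets each orbit of $U$, and settle the closed-orbit claim by the same order-theoretic argument (you merely make explicit the antisymmetry step, that distinct orbits cannot share a closure). The only small correction is bibliographic: the closure-reversal property is the result the paper cites from \cite{matsuki2}, not \cite{matsuki1}.
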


\begin{proof}
We consider the closure orders on the set of $K$-orbits and on the set of
$G_{0}$-orbits. Then Matsuki has shown \cite{matsuki2} that duality reverses
the corresponding closure relations. It follows that
\[
S\in \Upsilon(S_{0})\Longleftrightarrow S_{0}\subseteq\overline{S}%
\Longleftrightarrow S_{0}\preceq S\text{.}%
\]
Thus if $S\notin \Upsilon(S_{0})$ then $S_{0}\cap\overline{S}=\varnothing$ and
therefore $S_{0}\cap\overline{S_{1}}=\varnothing$ for each $G_{0}$-obit
$S_{1}$ contained in $\overline{S}$. Hence
\[
U\cap\overline{S}=\varnothing.
\]
Since there are a finite number of orbits the set
\[
C=%
{\displaystyle\bigcup\limits_{S\notin \Upsilon(S_{0})}}
\overline{S}%
\]
is closed and therefore
\[
U=X-C
\]
is open.

Now suppose $W$ is an open $G_{0}$-invariant submanifold that contains $S_{0}%
$. Suppose $S\subseteq U$. Then $S_{0}\subseteq\overline{S}$. Thus
\[
\overline{S}\cap W\neq\varnothing.
\]
Hence $\overline{S}\cap W$ is a nonempty open $G_{0}$-invariant subset of
$\overline{S}$. Since $S$ is locally closed it follows that $S$ is open and
dense in $\overline{S}$. Hence, from the $G_{0}$-invariance it follows that
\[
S\subseteq\overline{S}\cap W\Longrightarrow S\subseteq W
\]
so that $U\subseteq W$, which proves that $U$ is the smallest $G_{0}%
$-invariant open submanifold that contains $S_{0}$.

To prove the last claim first observe that if $S$ is a $G_{0}$-orbit contained
in $\overline{S_{0}}$ then $S\preceq S_{0}$ so that $S\subseteq U$ if and only
if $S=S_{0}$. Thus%
\[
\overline{S_{0}}\cap U=S_{0}%
\]
and $S_{0}$ is closed in $U$. On the other hand, if $S$ is a closed $G_{0}%
$-orbit in $U$ then
\[
S=\overline{S}\cap U.
\]
However, from the definition of $U$ it follows that for $S\subseteq U$ then
$S_{0}\subseteq\overline{S}$ thus
\[
S_{0}\subseteq\overline{S}\cap U.
\]
It follows that $S=\overline{S}\cap U$ if and only if $S=S_{0}$.
\end{proof}

\begin{example}
Suppose $\mathfrak{p}\subseteq\mathfrak{g}$ is a parabolic subalgebra and let
$Y$ be the corresponding $G$-homogeneous space of parabolic subalgebras of
$\mathfrak{g}$ conjugate to $\mathfrak{p}$. For each $y\in Y$ let
$\mathfrak{p}_{y}$ denote the corresponding parabolic subalgebra of
$\mathfrak{g}$. For $x\in X$ there is a unique $y\in Y$ such that
$\mathfrak{b}_{x}\subseteq\mathfrak{p}_{y}.$Thus there is a canonical
$G$-equivariant projection%
\[
\pi:X\rightarrow Y
\]
given by $\pi(x)=y$ if $\mathfrak{b}_{x}\subseteq\mathfrak{p}_{y}$. A point
$y\in Y$ is called \emph{special }if $\mathfrak{p}_{y}$ contains a special
Borel subalgebra. Suppose $W\subseteq Y$ is an open $G_{0}$-orbit and let
$y\in W$ be a special point. Let $O$ be the $K$-orbit of $y$. Then $O$ is
closed in $Y$ (in fact $O\subseteq W$) and Matsuki has shown \cite{matsuki3}
that the $G_{0}$-orbits in $U=\pi^{-1}(W)$ are Matsuki dual to the $K$-orbits
in $\pi^{-1}(O)$ . One also knows that there is a unique $G_{0}$-orbit $S_{0}$
that is closed in $U$ and that its Matsuki dual $Q_{0}$ is the unique open
orbit in the closed algebraic variety $\pi^{-1}(O)$. Thus $\overline{Q_{0}%
}=\pi^{-1}(O)$ and it follows that a $G_{0}$-orbit $S$ is contained in $U$ if
and only if its Matsuki dual $Q$ is contained in $\overline{Q_{0}}$ thus $U$
is the smallest $G_{0}$-invariant open submanifold that contains $S_{0}$.
Observe that, in this case, the associated algebraic variety $\overline{Q_{0}%
}=\pi^{-1}(O)$ is smooth since the fibers of $\pi$ are smooth and since $\pi$
defines a locally trivial algebraic fibration of $\pi^{-1}(O)$ over $O$.
\end{example}

\section{Equivariant line bundles}

In this section we introduce the equivariant line bundles on the full flag
space $X$, as well as the corresponding standard modules associated to a
$G_{0}$-orbit $S\subseteq X$. We begin this section by introducing the
abstract Cartan dual, which is the parameter set for the the twisted sheaves
of differential operators (TDOs) on $X$. Recall that $G$ is the complex
adjoint group of $\mathfrak{g}$. For $x\in X$ let $\mathfrak{n}_{x}$ the
nilradical of the corresponding Borel subalgebra $\mathfrak{b}_{x}$ and put
\[
\mathfrak{h}_{x}=\mathfrak{b}_{x}/\mathfrak{n}_{x}\text{.}%
\]
Since the stabilizer of $x$ in $G$ (i.e. the corresponding Borel subgroup in
$G$) acts trivially on $\mathfrak{h}_{x}$ it also acts trivially on the
complex dual $\mathfrak{h}_{x}^{\ast}$. It follows that the corresponding
$G$-homogeneous holomorphic vector bundle on $X$ is trivial and that the
associated space of global sections $\mathfrak{h}^{\ast}$ is naturally
isomorphic to $\mathfrak{h}_{x}^{\ast}$ via the evaluation at $x$. The vector
space $\mathfrak{h}^{\ast}$ is called \emph{the abstract Cartan dual for}
$\mathfrak{g}$. If $\mathfrak{c}$ is a Cartan subalgebra of $\mathfrak{b}_{x}%
$, then by coupling the natural projection of $\mathfrak{c}$ onto
$\mathfrak{h}_{x}$ with the evaluation at $x$, we obtain an isomorphism of
$\mathfrak{c}^{\ast}$ with $\mathfrak{h}^{\ast}$ \emph{called the
specialization of} $\mathfrak{h}^{\ast}$ \emph{to} $\mathfrak{c}^{\ast}$
\emph{at} $x$. Using the specializations we can identify an abstract set of
roots
\[
\Sigma\subseteq\mathfrak{h}^{\ast}%
\]
and an abstract set of positive roots
\[
\Sigma^{+}\subseteq\Sigma
\]
where $\Sigma$ corresponds to the set of roots of $\mathfrak{c}$ in
$\mathfrak{g}$ and where $\Sigma^{+}$ corresponds to the roots of
$\mathfrak{c}$ in $\mathfrak{b}_{x}$, via the specialization at $x$. Given
$\alpha\in\Sigma$ and $\mu\in\mathfrak{h}^{\ast}$ we can also define the
complex number
\[
\overset{\vee}{\alpha}(\mu)
\]
\emph{the value of} $\mu$ \emph{on the coroot of }$\alpha$. The element
$\mu\in\mathfrak{h}^{\ast}$ is called \emph{integral} if
\[
\overset{\vee}{\alpha}(\mu)\in\boldsymbol{Z}\text{ \ for each }\alpha\in
\Sigma\text{.}%
\]
It just so happens that the half-sum of positive roots, denoted by $\rho$, is
an integral element of $\mathfrak{h}^{\ast}$ that plays a key role in the TDO parametrization.

Let $\widetilde{G}$ denote the universal cover of $G$ and suppose $\mu
\in\mathfrak{h}^{\ast}$ is integral. For a point $x\in X$, the Lie algebra of
the corresponding Borel subgroup $\widetilde{B_{x}}$ in $\widetilde{G}$ (i.e.
the stabilizer of $x$ in $\widetilde{G}$) is given by $\mathfrak{b}_{x}%
\cap\left[  \mathfrak{g,g}\right]  $. Thus, using the evaluation at $x$, $\mu$
determines a one dimensional representation
\[
\mathfrak{b}_{x}\cap\left[  \mathfrak{g,g}\right]  \rightarrow\mathfrak{h}%
_{x}\overset{\mu_{x}}{\longrightarrow}\mathbb{C}\text{.}%
\]
Since $\widetilde{G}$ is simply connected, it is known that there is a
(unique) holomorphic character of $\widetilde{B_{x}}$ whose derivative is
given, in this way, by $\mu_{x}$. Thus, corresponding to each integral $\mu
\in\mathfrak{h}^{\ast}$ there is a corresponding $\widetilde{G}$-homogeneous
holomorphic line bundle
\[%
\begin{array}
[c]{c}%
\mathbb{L}(\mu)\\
\downarrow\\
X
\end{array}
\cdot
\]
Let $\mathcal{O}(\mu)$ be the corresponding sheaf of holomorphic sections. In
a natural way, $\widetilde{G}$ and thus $\left[  \mathfrak{g,g}\right]  $ act
on $\mathcal{O}(\mu)$. Let $\mathfrak{z}$ be the center of $\mathfrak{g}$.
Suppose $W\subseteq X$ is an open set and let
\[
\sigma:W\rightarrow\mathbb{L}(\mu)
\]
be a local holomorphic section. Then we extend $\mathcal{O}(\mu)$ to a sheaf
of $\mathfrak{g}$-modules by defining
\[
\left(  \xi\cdot\sigma\right)  \left(  x\right)  =\mu(\xi)\sigma(x)\text{ for
}\xi\in\mathfrak{z}\text{ and }x\in W\text{.}%
\]
We say $\mathbb{L}(\mu)$ \emph{is a} $G_{0}$-\emph{equivariant line bundle} if
there exists a $G_{0}$-action on $\mathbb{L}(\mu)$ (in the sense of
differentiable $G_{0}$-actions on vector bundles over differentiable $G_{0}%
$-spaces) such that the induced morphisms $\mathbb{L}(\mu)\rightarrow
\mathbb{L}(\mu)$, given by the multiplication by group elements, are
holomorphic and such that the derivative of the $G_{0}$-action on local
sections coincides with the $\mathfrak{g}$-action.

\begin{example}
An important class of $G_{0}$-equivariant holomorphic line bundles corresponds
to the family of (equivalence classes of) finite-dimensional irreducible
representations of $G_{0}$ that are also irreducible for the corresponding
$\mathfrak{g}$-action (i.e. irreducible finite-dimensional $\mathfrak{g}%
$-modules with a compatible $G_{0}$-action). In particular, let $V$ be a
finite-dimensional $G_{0}$-module that is irreducible as a $\mathfrak{g}%
$-module. For each $x\in X$ let $G_{0}\left[  x\right]  $ be the stabilizer of
$x$ and consider the corresponding $(\mathfrak{b}_{x}$, $G_{0}\left[
x\right]  )$-module given by
\[
V/\mathfrak{n}_{x}V\text{.}%
\]
Choosing a Cartan subalgebra $\mathfrak{c}\subseteq\mathfrak{b}_{x}$ and using
the specialization to $x$, the action of $\mathfrak{c}$ on $V/\mathfrak{n}%
_{x}V$ is given by an element $\mu\in\mathfrak{h}^{\ast}$ that corresponds to
the lowest weight in $V$. Hence, if we define
\[
\lambda=\mu-\rho
\]
then $\overset{\vee}{\alpha}(\lambda)$ is a negative integer, for each
positive root $\alpha\in\Sigma^{+}$. \ We can define the total space of a
$G_{0}$-equivariant holomorphic line bundle $\mathbb{L}_{\lambda}$ by
\[
\mathbb{L}_{\lambda}=%
{\displaystyle\bigcup\limits_{x\in X}}
V/\mathfrak{n}_{x}V\text{.}%
\]
Using the action of $\widetilde{G}$ one can define holomorphic transition
functions. Then the Borel-Weil Theorem says that the representation $V$ is
recovered as the global holomorphic sections of the bundle $\mathbb{L}%
_{\lambda}$.
\end{example}

In general, when $\mathcal{O}(\mu)$ is the sheaf of holomorphic sections of
$G_{0}$-equivariant line bundle we will use the shifted parameter $\lambda
=\mu-\rho$ and write
\[
\mathcal{O}_{\lambda}=\mathcal{O}(\mu)
\]
for the sheaf of holomorphic sections. We say $\lambda$ is \emph{regular} if
\[
\overset{\vee}{\alpha}(\lambda)\neq0\text{ \ for each root }\alpha\in
\Sigma\text{.}%
\]
An element $\lambda$ is called \emph{singular} when it is not regular. We say
$\lambda$ \emph{is antidominant} if
\[
\overset{\vee}{\alpha}(\lambda)\notin\mathbb{N}\text{ \ for each positive root
}\alpha\in\Sigma^{+}\text{.}%
\]

Suppose $\mathcal{O}_{\lambda}$ is the sheaf of holomorphic sections of a
$G_{0}$-equivariant line bundle. Let $S$ be a $G_{0}$-orbit in $X$ and let $Q$
be the Matsuki dual to $S$. We define\emph{ the vanishing number} $q$
\emph{of} $S$ to be the (complex) codimension of $Q$ in $X$. Suppose $\lambda$
is antidominant and regular. The one of the main results of the Hecht-Taylor
analytic localization theory is that the compactly supported sheaf
cohomologies of the restriction $\mathcal{O}_{\lambda}\mid_{S}$ of
$\mathcal{O}_{\lambda}$ to $S$ vanish, except in degree $q$, in which case
\[
H_{\text{c}}^{q}(S,\mathcal{O}_{\lambda}\mid_{S})
\]
is the minimal globalization of a corresponding standard Beilinson-Bernstein
module (we will describe this module in the following section). In particular,
one knows $H_{\text{c}}^{q}(S,\mathcal{O}_{\lambda}\mid_{S})$ has a unique
irreducible submodule. In general (for any parameter $\lambda$) one knows
\cite{bratten3} that the sheaf cohomology groups
\[
H_{\text{c}}^{p}(S,\mathcal{O}_{\lambda}\mid_{S})
\]
vanish for $p<q$ and that in the nonzero cases these cohomology groups are
minimal globalizations of the sheaf cohomology groups of an associated
standard Harish-Chandra sheaf. 

In general terms, we now consider a simple geometric procedure which can be
used in the context of the Hecht-Taylor realization of minimal globalizations,
to study representations. We first remark that the sheaves $\mathcal{O}%
_{\lambda}$ are examples of what is referred to in the Hecht-Taylor
development as DNF (stands for dual nuclear Fr\'{e}chet) sheaves of analytic
$G_{0}$-modules (we will not need the formalism of DNF sheaves of analytic
$G_{0}$-modules in our study however we would simply like to mention the
general criteria used to establish the following results). We want to remark
that, in the case of the global sheaf cohomology on $X$, the sheaf cohomology
groups
\[
H^{p}(X,\mathcal{O}_{\lambda})
\]
are finite-dimensional and were originally studied by Bott in \cite{bott}. Now
suppose $L\subseteq$ $X$ is a locally closed $G_{0}$-invariant subspace and
let
\[
\left(  \mathcal{O}_{\lambda}\mid_{L}\right)  ^{X}%
\]
denote the extension by zero to $X$ of the restriction of $\mathcal{O}%
_{\lambda}$ to $L$. Then there is a natural isomorphism of functors
\[
H^{p}(X,\left(  \mathcal{O}_{\lambda}\mid_{L}\right)  ^{X})\cong H_{\text{c}%
}^{p}(L,\mathcal{O}_{\lambda}\mid_{L})
\]
and it follows from the results of Hecht and Taylor (at least for $\lambda$
regular: in the singular case one can prove this by a tensoring argument as in
\cite{bratten3}) that the sheaf cohomology groups
\[
H_{\text{c}}^{p}(L,\mathcal{O}_{\lambda}\mid_{L})
\]
groups are minimal globalizations of Harish-Chandra modules. Let $W\subseteq
L$ be an open, $G_{0}$-invariant subspace and let $C=L-W$. Then we have the
following short exact sequence of DNF sheaves:of analytic $G_{0}$-modules
\[
0\rightarrow\left(  \mathcal{O}_{\lambda}\mid_{W}\right)  ^{X}\rightarrow
\left(  \mathcal{O}_{\lambda}\mid_{L}\right)  ^{X}\rightarrow\left(
\mathcal{O}_{\lambda}\mid_{C}\right)  ^{X}\rightarrow0\text{.}%
\]
Therefore the corresponding long exact sequence in cohomology%

\[
\cdots\rightarrow H_{\text{c}}^{p}(W,\mathcal{O}_{\lambda}\mid_{W})\rightarrow
H_{\text{c}}^{p}(L,\mathcal{O}_{\lambda}\mid_{L})\rightarrow H_{\text{c}}%
^{p}(C,\mathcal{O}_{\lambda}\mid_{C})\rightarrow H_{\text{c}}^{p+1}%
(W,\mathcal{O}_{\lambda}\mid_{W})\rightarrow\cdots
\]
is a sequence of minimal globalizations with continuous $G_{0}$-morphisms.

Return to the case where $S$ is a $G_{0}$-orbit and let $U$ be the smallest
$G_{0}$-invariant open set that contains $S$. Then the compactly supported
sheaf cohomology groups
\[
H_{\text{c}}^{p}(U,\mathcal{O}_{\lambda}\mid_{U})
\]
are minimal globalizations of Harish-Chandra modules. Recall that $q$ is the
vanishing number of $S$. We will now show that there is a natural embedding of
$H_{\text{c}}^{q}(U,\mathcal{O}_{\lambda}\mid_{U})$ in $H_{\text{c}}%
^{q}(S,\mathcal{O}_{\lambda}\mid_{S})$. Let $W=U-S$. Then $W$ is open and we
have the following short exact sequence of sheaves on $X$:
\[
0\rightarrow\left(  \mathcal{O}_{\lambda}\mid_{W}\right)  ^{X}\rightarrow
\left(  \mathcal{O}_{\lambda}\mid_{U}\right)  ^{X}\rightarrow\left(
\mathcal{O}_{\lambda}\mid_{S}\right)  ^{X}\rightarrow0.
\]
This sequence will induce a sequence of continuous morphisms of minimal
globalizations when we apply the long exact sequence of sheaf cohomology. To
prove we have an inclusion in grade $q$ we use the vanishing on $S$, and the
following Lemma.

\begin{lemma}
Maintain the previously defined notations, Then
\[
H_{\text{c}}^{p}(W,\mathcal{O}_{\lambda}\mid_{W})=0
\]
for $p\leq q$
\end{lemma}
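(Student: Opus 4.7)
The strategy is to reduce the vanishing on $W$ to the known vanishing on each individual $G_{0}$-orbit sitting inside $W$, exploiting the fact that every such orbit is strictly below $S$ in the Matsuki-dual closure order and therefore has a strictly larger vanishing number than $q$.

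The first step would be to describe $W$ orbit-wise. By Proposition 2.1, every $G_{0}$-orbit $S'\subseteq U$ has Matsuki dual $Q'\subseteq\overline{Q}$, where $Q$ is the Matsuki dual of $S$. If $S'\neq S$, then $Q'\neq Q$, and since $Q$ is locally closed it is Zariski open and dense in $\overline{Q}$; hence $Q'\subseteq\overline{Q}-Q$ sits inside a proper closed subvariety of $\overline{Q}$, so
\[
\dim_{\mathbb{C}}Q'<\dim_{\mathbb{C}}Q.
\]
Writing $q'$ for the vanishing number of $S'$, this forces $q'\geq q+1$. The general vanishing recalled just before the lemma (valid for arbitrary $\lambda$) then yields
\[
H^{p}_{\text{c}}(S',\mathcal{O}_{\lambda}\mid_{S'})=0\quad\text{for every }p\leq q.
\]

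I would then glue these pointwise vanishings together by induction on the (finite) number of $G_{0}$-orbits in $W$. For the inductive step, pick a $G_{0}$-orbit $S_{1}\subseteq W$ that is minimal in the closure order restricted to $W$; such an orbit is automatically closed in $W$, so $W_{1}=W-S_{1}$ is open and $G_{0}$-invariant and contains one fewer orbit. The extension-by-zero short exact sequence
\[
0\rightarrow(\mathcal{O}_{\lambda}\mid_{W_{1}})^{X}\rightarrow(\mathcal{O}_{\lambda}\mid_{W})^{X}\rightarrow(\mathcal{O}_{\lambda}\mid_{S_{1}})^{X}\rightarrow0
\]
induces a long exact sequence of compactly supported cohomology groups. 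The first step gives vanishing in degrees $p\leq q$ on $S_{1}$; the induction hypothesis gives the same on $W_{1}$; sandwiched between these, $H^{p}_{\text{c}}(W,\mathcal{O}_{\lambda}\mid_{W})$ must vanish in the same range. The base case, in which $W$ consists of a single orbit, is a direct application of the first step.

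The genuinely delicate point is the codimension inequality $q'>q$; the remainder is a formal stratification argument. That inequality rests on the fact that $Q$ is the unique top-dimensional $K$-orbit in $\overline{Q}$, which is a standard consequence of $K$-orbits being locally closed. I do not expect complications from the orbit-by-orbit bookkeeping, since the finiteness of the $G_{0}$-orbit decomposition of $X$ makes the induction terminate after finitely many applications of the long exact sequence.
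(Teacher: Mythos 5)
Your argument is correct and is essentially the paper's own proof: the same key observation that every $G_{0}$-orbit in $W$ has Matsuki dual inside $\overline{Q}-Q$ and hence vanishing number at least $q+1$ (because $Q$ is open and dense in $\overline{Q}$), followed by the same induction on the number of orbits, removing a closed orbit (the paper takes one of minimal dimension, you take one minimal in the closure order, which amounts to the same thing) and running the long exact sequence attached to the extension-by-zero short exact sequence. No gaps to report.
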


\begin{proof}
First observe that since $Q$ is open in $\overline{Q}$ then for each
$Q_{0}\subseteq$ $\overline{Q}$ such that $Q_{0}\neq Q$ then the codimension
of $Q_{0}$ is strictly bigger than the codimension of $Q$, so that the
vanishing numbers for $G_{0}$-orbits in $W$ are at least $q+1$. Suppose
$O\subseteq W$ is a $G_{0}$-invariant open subset. We define the length of $O$
to be the number of $G_{0}$-orbits contained in $O$. We show that the
announced vanishing result holds for every $G_{0}$-invariant open subset of
$W$ by an induction on length. When $O$ has length one then it is an open
$G_{0}$-orbit so the result holds since it has vanishing number at least
$q+1$. In general let $S_{0}\subseteq O$ be a $G_{0}$-orbit of minimal
dimension. Since $S_{0}$ is open and dense in its closure it follows that any
$G_{0}$-orbit in the closure different from $S_{0}$ has strictly smaller
dimension. Thus
\[
\overline{S_{0}}\cap O=S_{0}%
\]
and $O_{0}=O-S_{0}$ is an open $G_{0}$-invariant of shorter length. Thus the
result follows by induction, using the long exact sequence in cohomology
applied to the short exact sequence of sheaves:
\[
0\rightarrow\left(  \mathcal{O}_{\lambda}\mid_{O_{0}}\right)  ^{X}%
\rightarrow\left(  \mathcal{O}_{\lambda}\mid_{O}\right)  ^{X}\rightarrow
\left(  \mathcal{O}_{\lambda}\mid_{S_{0}}\right)  ^{X}\rightarrow0.
\]

\end{proof}

\begin{corollary}
Let $\mathcal{O}_{\lambda}$ be the sheaf of sections of a $G_{0}$-equivariant
holomorphic line bundle over $X$. Suppose $S\subseteq X$ is $G_{0}$-orbit with
associated vanishing number $q$ and let $U$ be the smallest $G_{0}$-invariant
open submanifold that contains $S$. Then there is a natural inclusion of
analytic $G_{0}$-modules
\[
H_{\text{c}}^{q}(U,\mathcal{O}_{\lambda}\mid_{U})\rightarrow H_{\text{c}}%
^{q}(S,\mathcal{O}_{\lambda}\mid_{S})
\]

\end{corollary}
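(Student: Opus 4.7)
The plan is to deduce the inclusion directly from the long exact sequence of compactly supported cohomology associated to the pair $(S, W)$, where $W = U - S$, and use the vanishing given by the preceding Lemma to kill the incoming term in degree $q$.

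First I would set $W = U - S$. By Proposition~2.1, $S$ is closed in $U$, so $W$ is an open, $G_0$-invariant submanifold of $X$. Applying extension by zero to $X$ of the restrictions of $\mathcal{O}_\lambda$, one obtains the short exact sequence of DNF sheaves of analytic $G_0$-modules already displayed in the text,
\[
0 \to (\mathcal{O}_\lambda\mid_W)^X \to (\mathcal{O}_\lambda\mid_U)^X \to (\mathcal{O}_\lambda\mid_S)^X \to 0.
\]
Taking global sheaf cohomology on $X$ and using the natural identification $H^p(X,(\mathcal{O}_\lambda\mid_L)^X)\cong H_{\text{c}}^p(L,\mathcal{O}_\lambda\mid_L)$ for locally closed $G_0$-invariant $L$, I get the long exact sequence of minimal globalizations
\[
\cdots \to H_{\text{c}}^q(W,\mathcal{O}_\lambda\mid_W) \to H_{\text{c}}^q(U,\mathcal{O}_\lambda\mid_U) \to H_{\text{c}}^q(S,\mathcal{O}_\lambda\mid_S) \to \cdots
\]
with continuous $G_0$-equivariant morphisms.

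The only input needed to conclude is that the map on the left out of $H_{\text{c}}^q(W,\mathcal{O}_\lambda\mid_W)$ has zero source. But this is exactly the content of the Lemma just proved: since $S$ is the unique closed $G_0$-orbit in $U$, every $G_0$-orbit contained in $W$ has Matsuki dual of strictly larger codimension than $q$, so the vanishing number of each such orbit is $\geq q+1$, and the inductive argument on the number of orbits gives $H_{\text{c}}^p(W,\mathcal{O}_\lambda\mid_W)=0$ for all $p\leq q$. Plugging $p=q$ into the long exact sequence yields the injectivity of
\[
H_{\text{c}}^q(U,\mathcal{O}_\lambda\mid_U) \hookrightarrow H_{\text{c}}^q(S,\mathcal{O}_\lambda\mid_S),
\]
which is the natural inclusion of analytic $G_0$-modules claimed.

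There is no real obstacle here: the Lemma has done all the work. The only points worth double-checking are (i) that the short exact sequence of extensions-by-zero is genuinely exact on $X$ (which follows because $S$ is closed in $U$, so $W$ is open in $U$), and (ii) that the connecting maps and vertical identifications in the long exact sequence are $G_0$-equivariant and continuous in the DNF category, which is part of the Hecht--Taylor formalism already invoked above. Given these, the Corollary follows immediately.
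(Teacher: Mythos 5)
Your argument is correct and is essentially the paper's own: the paper sets up exactly the same short exact sequence $0\to(\mathcal{O}_{\lambda}\mid_{W})^{X}\to(\mathcal{O}_{\lambda}\mid_{U})^{X}\to(\mathcal{O}_{\lambda}\mid_{S})^{X}\to0$ with $W=U-S$ and deduces the inclusion in degree $q$ from the long exact sequence together with the Lemma's vanishing $H_{\text{c}}^{p}(W,\mathcal{O}_{\lambda}\mid_{W})=0$ for $p\leq q$. Your observations that $S$ is closed in $U$ (so $W$ is open and invariant) and that the maps are continuous $G_{0}$-morphisms in the DNF framework are precisely the points the paper relies on.
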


\section{Localization and standard Beilinson-Bernstein modules}

We begin this section by introducing the sheaves of twisted differential
operators and reviewing some of the necessary theory. For a basic reference on
the algebraic side of the localization theory, we note that the article by
Milicic \cite{milicic} provides a nice overview to the geometric realization
of Harish-Chandra modules given by the Beilinson-Bernstein theory.

Let $U(\mathfrak{g})$ be the enveloping algebra of $\mathfrak{g}$ and let
$Z(\mathfrak{g)}$ be the center of $U(\mathfrak{g})$. \emph{An infinitesimal
character} $\Theta$ is a morphism of algebras (with identity)
\[
\Theta:Z(\mathfrak{g)}\rightarrow\mathbb{C}\text{.}%
\]
We let $U_{\Theta}$ be the quotient of $U(\mathfrak{g})$ by the two-sided
generated from the kernel of $\Theta$ in $Z(\mathfrak{g)}$. Observe that
$U_{\Theta}$ is the algebra that acts naturally on a $\mathfrak{g}$-module
with infinitesimal character $\Theta$.

Let $W$ be the Weyl group of $\mathfrak{h}^{\ast}$. By Harish-Chandra's
classical result, $Z(\mathfrak{g})$ is isomorphic to the Weyl group invariants
in the enveloping algebra of a Cartan subalgebra of $\mathfrak{g}$. It follows
that the infinitesimal characters are naturally parametrized by the $W$-orbits
in $\mathfrak{h}^{\ast}$. For $\lambda\in\mathfrak{h}^{\ast}$ we write
\[
\Theta=W\lambda\text{ and }\lambda\in\Theta
\]
when the $W$ orbit of $\lambda$ parameterizes the infinitesimal character
$\Theta$. It is known that if $\lambda\in\mathfrak{h}^{\ast}$ is integral (or
regular) then $w\lambda$ is integral (or regular) for every $w\in W$. In this
case we also say that the corresponding infinitesimal character is
\emph{integral} (or \emph{regular}). When an infinitesimal character $\Theta$
is integral and regular then there exists a unique $\lambda\in\Theta$ that is
antidominant. We remark that the infinitesimal character of an irreducible
admissible representation is an important invariant and that the
Beilinson-Berstein realization of irreducible Harish-Chandra modules with
infinitesimal character $\Theta$ depends, to some extent, on the choice of
$\lambda\in\Theta$.

At this point we need to distinguish between the algebraic and analytic
structures on $X$. \ Therefore we consider the full flag space $X$ as both an
algebraic variety (with the Zariski topology) and as a complex manifold (with
the analytic topology) according to the context. Since the line bundles,
defined for integral $\lambda\in\mathfrak{h}^{\ast}$ in the previous section,
have a compatible algebraic structure, we can consider the corresponding sheaf
of algebraic sections $\mathcal{O}_{\lambda}^{\text{alg}}$ defined on the
algebraic variety $X$. Associated to the sheaf $\mathcal{O}_{\lambda
}^{\text{alg}}$ is a corresponding twisted sheaf of differential operators
(TDO) $\mathcal{D}_{\lambda}^{\text{alg}}$. We can also consider the
corresponding TDO $\mathcal{D}_{\lambda}$, with holomorphic coefficients and
defined on complex variety $X$. In a natural way, $\mathcal{O}_{\lambda}$ is a
sheaf of modules for $\mathcal{D}_{\lambda}$. When $\mathcal{O}_{\lambda}$ is
the sheaf of holomorphic sections of a $G_{0}$-equivariant line bundle then
$G_{0}$ acts on $\mathcal{D}_{\lambda}$ while $K$ acts compatibly (extending
the $K_{0}$-action) on $\mathcal{O}_{\lambda}^{\text{alg}}$ and $\mathcal{D}%
_{\lambda}^{\text{alg}}$. We want to emphasize that we are using the shifted
parametrization from the previous section. In particular, $\mathcal{O}_{-\rho
}$ is the sheaf of holomorphic functions on $X$ and $\mathcal{D}_{-\rho}$ is
the sheaf of holomorphic differential operators.

Suppose $\lambda\in\Theta$. Then Beilinson and Bernstein show that
\[
\Gamma(X,\mathcal{D}_{\lambda}^{\text{alg}})\cong U_{\Theta}\text{ and }%
H^{p}(X,\mathcal{D}_{\lambda}^{\text{alg}})=0\text{ for }p>0.
\]
Thus the sheaf cohomology groups of a sheaf of $\mathcal{D}_{\lambda
}^{\text{alg}}$-modules are $\mathfrak{g}$-modules with infinitesimal
character $\Theta$. When $\mathcal{F}$ is a sheaf of quasi-coherent
$\mathcal{D}_{\lambda}^{\text{alg}}$-modules and $\lambda$ is antidominant,
Beilinson and Bernstein show
\[
\text{ }H^{p}(X,\mathcal{F})=0\text{ for }p>0\text{.}%
\]
In particular, when $\lambda$ is antidominant, the functor of global sections
is exact on the category of quasi-coherent $\mathcal{D}_{\lambda}^{\text{alg}%
}$-modules. The \emph{localization functor}
\[
\Delta_{\lambda}(M)=\mathcal{D}_{\lambda}^{\text{alg}}\otimes_{U_{\Theta}}(M)
\]
is defined on the category of $U_{\Theta}$-modules and is the left adjoint to
the global sections functor on the category of quasi-coherent $\mathcal{D}%
_{\lambda}^{\text{alg}}$-modules. When $\lambda$ is antidominant and regular
then Beilinson and Bernstein show that the localization functor and the global
sections functor are mutual inverses and determine an equivalence of categories.

Suppose $S\subseteq X$ is a $G_{0}$-orbit with associated vanishing number
$q$. Let $\mathcal{O}_{\lambda}$ be the sheaf of holomorphic sections of a
$G_{0}$-equivariant line bundle. We now consider the geometric construction of
the underlying Harish-Chandra module of the minimal globalization
\[
H_{\text{c}}^{q}(S,\mathcal{O}_{\lambda}\mid_{S})\text{.}%
\]
Let $Q$ denote the $K$-orbit Matsuki dual to $S$ (we equip $Q$ with the
Zariski topology) and consider the $K$-equivariant sheaf $\mathcal{O}%
_{\lambda}^{\text{alg}}$. Let $i:Q\rightarrow X$ denote the inclusion and let
\[
i^{\ast}\left(  \mathcal{O}_{\lambda}^{\text{alg}}\right)  =i^{-1}\left(
\mathcal{O}_{\lambda}^{\text{alg}}\right)  \otimes_{i^{-1}\left(
\mathcal{O}_{X}^{\text{alg}}\right)  }\mathcal{O}_{Q}^{\text{alg}}%
\]
be the inverse image of $\mathcal{O}_{\lambda}^{\text{alg}}$ with respect to
the structure sheaves of the algebraic varieties $Q$ and $X$. Therefore
$i^{\ast}\left(  \mathcal{O}_{\lambda}^{\text{alg}}\right)  $ is the sheaf of
sections of a corresponding $K$-homogeneous algebraic line bundle defined on
$Q$. Let $\mathcal{D}_{Q,\lambda}^{\text{alg}}$ be the sheaf of differential
operators for the locally free sheaf $i^{\ast}\left(  \mathcal{O}_{\lambda
}^{\text{alg}}\right)  $. \ Then there is a corresponding \emph{direct image
functor} $i_{+}$ \emph{in the category of sheaves of (twisted)} $\mathcal{D}%
$-\emph{modules.} In this case, since the morphism $i$ is an affine inclusion
of smooth varieties, the direct image is an exact functor \cite{duality}. The
sheaf
\[
\mathcal{I}(Q,\lambda)=i_{+}i^{\ast}\left(  \mathcal{O}_{\lambda}^{\text{alg}%
}\right)
\]
is a $K$-equivariant sheaf of $\mathcal{D}_{\lambda}^{\text{alg}}$-modules
called \emph{the corresponding standard Harish-Chandra sheaf} \emph{on} $X$.
One knows that $i_{+}i^{\ast}\left(  \mathcal{O}_{\lambda}^{\text{alg}%
}\right)  $ contains a unique (coherent and $K$-invariant) irreducible
subsheaf
\[
\mathcal{J}(Q,\lambda)\subseteq\mathcal{I}(Q,\lambda)\text{.}%
\]
of $\mathcal{D}_{\lambda}^{\text{alg}}$-modules. We note that the notation
being used is a bit ambiguous since the structure of these objects also
depends on the $K_{0}$-action and not just the orbit $Q$ and the integral
parameter $\lambda$. However, in the current context it should be clear how
one construction leads to the other and we feel our approach avoids an overly
complicated notation. The Harish-Chandra module
\[
I(Q,\lambda)=\Gamma(X,i_{+}i^{\ast}\left(  \mathcal{O}_{\lambda}^{\text{alg}%
}\right)  )
\]
is called the \emph{corresponding standard Beilinson-Bernstein module}. In
general, one knows \cite{bratten3} that $I(Q,\lambda)$ is the underlying
Harish-Chandra module of $H_{\text{c}}^{q}(S,\mathcal{O}_{\lambda}\mid_{S})$.
Observe that when $\lambda$ is antidominant and regular then, by the
equivalence of categories it follows that the Harish-Chandra module
\[
J(Q,\lambda)=\Gamma(X,\mathcal{J}(Q,\lambda))\subseteq I(Q,\lambda)
\]
is the unique irreducible submodule of the corresponding standard
Beilinson-Bernstein module. We call $I(Q,\lambda)$ \emph{a classifying module
}if $\lambda$ is antidominant and $J(Q,\lambda)\neq0$. As the name suggests,
the classifying modules are used in Beilinson-Bernstein classification of
irreducible admissible representations. This works perfectly when $G_{0}$ is a
connected, complex reductive group, however, in general, one must enlarge the
class of standard Harish-Chandra sheaves to include all irreducible
representations with the given integral infinitesimal character.

Suppose $\lambda$ is antidominant and regular and let $J(Q,\lambda
)_{\text{min}}$ denote the corresponding minimal globalization Then we have
the following.

\begin{proposition}
Maintain the above notations (in particular we assume $\lambda$ is
antidominant and regular). Let $U$ be the smallest $G_{0}$-invariant open set
that contains $S$. Then there exists a natural inclusion
\[
J(Q,\lambda)_{\text{min}}\subseteq H_{\text{c}}^{q}(U,\mathcal{O}_{\lambda
}\mid_{U})\text{.}%
\]

\end{proposition}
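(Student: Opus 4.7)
The plan is to leverage the inclusion $H^q_{\text{c}}(U,\mathcal{O}_\lambda|_U)\hookrightarrow H^q_{\text{c}}(S,\mathcal{O}_\lambda|_S)$ established in the preceding corollary together with a composition-factor analysis. Since $\lambda$ is antidominant and regular, the Hecht-Taylor realization identifies $H^q_{\text{c}}(S,\mathcal{O}_\lambda|_S)$ with the minimal globalization $I(Q,\lambda)_{\text{min}}$. Because the minimal globalization functor is exact and fully faithful, and $J(Q,\lambda)$ is the unique irreducible Harish-Chandra submodule of $I(Q,\lambda)$, it follows that $J(Q,\lambda)_{\text{min}}$ is the unique irreducible analytic submodule of $H^q_{\text{c}}(S,\mathcal{O}_\lambda|_S)$. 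It therefore suffices to prove that $H^q_{\text{c}}(U,\mathcal{O}_\lambda|_U)$ is nonzero, as it must then contain the unique irreducible submodule $J(Q,\lambda)_{\text{min}}$.

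Next I would invoke the long exact sequence attached to the short exact sequence
\[
0\rightarrow(\mathcal{O}_\lambda|_W)^X\rightarrow(\mathcal{O}_\lambda|_U)^X\rightarrow(\mathcal{O}_\lambda|_S)^X\rightarrow 0,
\]
where $W=U-S$. By Lemma 3.3 and the Hecht-Taylor vanishing on $S$, this collapses to
\[
0\rightarrow H^q_{\text{c}}(U,\mathcal{O}_\lambda|_U)\rightarrow H^q_{\text{c}}(S,\mathcal{O}_\lambda|_S)\stackrel{\partial}{\rightarrow}H^{q+1}_{\text{c}}(W,\mathcal{O}_\lambda|_W).
\]
It is thus enough to verify that the composition $J(Q,\lambda)_{\text{min}}\hookrightarrow H^q_{\text{c}}(S,\mathcal{O}_\lambda|_S)\stackrel{\partial}{\rightarrow}H^{q+1}_{\text{c}}(W,\mathcal{O}_\lambda|_W)$ is zero; by irreducibility this amounts to showing that $J(Q,\lambda)$ does not occur as a composition factor of the underlying Harish-Chandra module of $H^{q+1}_{\text{c}}(W,\mathcal{O}_\lambda|_W)$.

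To carry out the composition-factor bookkeeping I would peel off $G_0$-orbits of minimal dimension in $W$ one at a time, mimicking the inductive procedure in the proof of Lemma 3.3. At each step the long exact sequence shows that every composition factor of $H^p_{\text{c}}(O,\mathcal{O}_\lambda|_O)_{K\text{-fin}}$ for a $G_0$-invariant open subset $O\subseteq W$ arises as a composition factor of some standard Beilinson-Bernstein module $I(Q_u,\lambda)$ attached to a $G_0$-orbit $S_u\subseteq W$ with Matsuki dual $Q_u$. By the Beilinson-Bernstein equivalence these composition factors correspond to the simple subquotients of the $K$-equivariant $\mathcal{D}_\lambda^{\text{alg}}$-module $\mathcal{I}(Q_u,\lambda)=i_{u,+}i_u^\ast\mathcal{O}_\lambda^{\text{alg}}$, which is supported on $\overline{Q_u}$. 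Hence every such composition factor has the form $J(Q_v,\lambda)$ with $Q_v\subseteq\overline{Q_u}$.

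Finally, $S_u\subseteq W=U-S$ translates via Matsuki duality into $Q_u\subsetneq\overline{Q}$, so $\dim Q_u<\dim Q$, whence $\overline{Q_u}\subsetneq\overline{Q}$ and in particular $Q\not\subseteq\overline{Q_u}$. Therefore $J(Q,\lambda)$ cannot appear as a composition factor of any $I(Q_u,\lambda)$ with $S_u\subseteq W$, nor of $H^{q+1}_{\text{c}}(W,\mathcal{O}_\lambda|_W)$, and the desired embedding follows. The main obstacle in this plan is the composition-factor bookkeeping in the analytic setting: one must combine the Hecht-Taylor identification of each $H^{q_u}_{\text{c}}(S_u,\mathcal{O}_\lambda|_{S_u})$ with the appropriate minimal globalization of $I(Q_u,\lambda)$ together with the Beilinson-Bernstein support constraint on the simple subquotients of $\mathcal{I}(Q_u,\lambda)$. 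Once these identifications are in place, the Matsuki duality and dimension comparison close the argument.
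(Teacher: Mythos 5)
Your argument is correct, but it closes the key step by a different mechanism than the paper. Both proofs start from the exact sequence $0\rightarrow H_{\text{c}}^{q}(U,\mathcal{O}_{\lambda}\mid_{U})\rightarrow H_{\text{c}}^{q}(S,\mathcal{O}_{\lambda}\mid_{S})\rightarrow H_{\text{c}}^{q+1}(W,\mathcal{O}_{\lambda}\mid_{W})$ and both peel off $G_{0}$-orbits of minimal dimension in $W$; but where you prove directly that the connecting map kills $J(Q,\lambda)_{\text{min}}$ by showing that $J(Q,\lambda)$ is not even a composition factor of $H_{\text{c}}^{q+1}(W,\mathcal{O}_{\lambda}\mid_{W})$ --- using the Beilinson--Bernstein equivalence and the fact that every simple subquotient of $\mathcal{I}(Q_{u},\lambda)$ is supported in $\overline{Q_{u}}\subseteq\overline{Q}-Q$, which does not contain $Q$ --- the paper instead argues by contradiction: assuming $H_{\text{c}}^{q}(U,\mathcal{O}_{\lambda}\mid_{U})=0$, it chases an embedding of $J(Q,\lambda)_{\text{min}}$ into $H_{\text{c}}^{q+1}$ of successively smaller invariant open subsets of $W$, at each stage ruling out an embedding into $H_{\text{c}}^{q+1}(S_{1},\mathcal{O}_{\lambda}\mid_{S_{1}})$ because that group, if nonzero, is a standard module whose unique irreducible submodule is $J(Q_{1},\lambda)_{\text{min}}\not\cong J(Q,\lambda)_{\text{min}}$, until it reaches an open orbit and obtains a contradiction. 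Your route proves a slightly stronger statement (no occurrence of $J(Q,\lambda)$ as a composition factor, not merely as a submodule) and yields the inclusion without a contradiction argument, at the price of invoking the localization equivalence and support theory of standard Harish--Chandra sheaves, which the paper's proof of this proposition manages to avoid, using only the uniqueness and pairwise non-isomorphism of the irreducible submodules attached to distinct orbits. One small imprecision: the simple subquotients of $\mathcal{I}(Q_{u},\lambda)$ need not all be of the form $\mathcal{J}(Q_{v},\lambda)$ in the paper's notation (they are irreducible $K$-equivariant $\mathcal{D}_{\lambda}^{\text{alg}}$-modules attached to orbits $Q_{v}\subseteq\overline{Q_{u}}$, possibly with a different irreducible equivariant connection); this does not affect your conclusion, since all you actually use is the support constraint, and $\mathcal{J}(Q,\lambda)$ has support $\overline{Q}\not\subseteq\overline{Q_{u}}$. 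Also, the vanishing lemma you cite is Lemma 3.2 in the paper, and your opening reduction to nonvanishing of $H_{\text{c}}^{q}(U,\mathcal{O}_{\lambda}\mid_{U})$ (the paper's actual reduction) is superseded by your direct proof that the connecting map vanishes on $J(Q,\lambda)_{\text{min}}$.
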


\begin{proof}
Since $I(Q,\lambda)$ is the underlying Harish-Chandra module of $H_{\text{c}%
}^{q}(S,\mathcal{O}_{\lambda}\mid_{S})$ it follows that $J(Q,\lambda
)_{\text{min}}$ is the unique irreducible submodule in $H_{\text{c}}%
^{q}(S,\mathcal{O}_{\lambda}\mid_{S})$. Therefore, to establish the result it
suffices to show that
\[
H_{\text{c}}^{q}(U,\mathcal{O}_{\lambda}\mid_{U})\neq0\text{.}%
\]
We follow the set-up used in Lemma 3.2 and argue by contradiction. Suppose
$H_{\text{c}}^{q}(U,\mathcal{O}_{\lambda}\mid_{U})=0$ and let $W=U-S$. Using
the long exact sequence in cohomology we obtain an inclusion
\[
J(Q,\lambda)_{\text{min}}\subseteq H_{\text{c}}^{q}(S,\mathcal{O}_{\lambda
}\mid_{S})\subseteq H_{\text{c}}^{q+1}(W,\mathcal{O}_{\lambda}\mid
_{W})\text{.}%
\]
Now suppose $S_{1}\subseteq W$ is a $G_{0}$-orbit of minimal dimension and put
$W_{1}=W-S$. Then $S_{1}$ is closed in $W_{1}$, so using the corresponding
long exact sequence in cohomology we obtain the sequence
\[
0\rightarrow H_{\text{c}}^{q+1}(W_{1},\mathcal{O}_{\lambda}\mid_{W_{1}%
})\rightarrow H_{\text{c}}^{q+1}(W,\mathcal{O}_{\lambda}\mid_{W})\rightarrow
H_{\text{c}}^{q+1}(S_{1},\mathcal{O}_{\lambda}\mid_{S_{1}})\rightarrow\cdots.
\]
Since $J(Q,\lambda)_{\text{min}}$ is irreducible either
\[
J(Q,\lambda)_{\text{min}}\subseteq H_{\text{c}}^{q+1}(W_{1},\mathcal{O}%
_{\lambda}\mid_{W_{1}})\text{ \ or \ }J(Q,\lambda)_{\text{min}}\subseteq
H_{\text{c}}^{q+1}(S_{1},\mathcal{O}_{\lambda}\mid_{S_{1}})\text{. }%
\]
However, if $H_{\text{c}}^{q+1}(S_{1},\mathcal{O}_{\lambda}\mid_{S_{1}})\neq0$
then this representation is the minimal globalization of the classifying
module $I(Q_{1},\lambda)$ where $Q_{1}$ is the $K$-orbit Matsuki dual to
$S_{1}$. Thus $J(Q_{1},\lambda)_{\text{min}}$ is the unique irreducible
submodule of $H_{\text{c}}^{q+1}(S_{1},\mathcal{O}_{\lambda}\mid_{S_{1}})$.
Since $Q_{1}\neq Q$ it follows that $J(Q,\lambda)$ is not isomorphic to
$J(Q_{1},\lambda)$. Therefore
\[
J(Q,\lambda)_{\text{min}}\subseteq H_{\text{c}}^{q+1}(W_{1},\mathcal{O}%
_{\lambda}\mid_{W_{1}})\text{.}%
\]
Proceeding in this fashion we would obtain that
\[
J(Q,\lambda)_{\text{min}}\subseteq H_{\text{c}}^{q+1}(O,\mathcal{O}_{\lambda
}\mid_{O})
\]
where $O$ is an open $G_{0}$-orbit. However this is impossible since
$H_{\text{c}}^{q+1}(O,\mathcal{O}_{\lambda}\mid_{O})$ is either zero or an
irreducible minimal globalization that is not isomorphic to $J(Q,\lambda
)_{\text{min}}$.
\end{proof}

\bigskip

The proof of our main result now consists of two steps. The first part is to
characterize the irreducible Harish-Chandra sheaf $\mathcal{J}(Q,\lambda)$
when the associated variety $\overline{Q}$ is smooth, Once we have have that
in hand, it turns out to be fairly straightforward to calculate the analytic
localization of $J(Q,\lambda)_{\text{min}}$ on $G_{0}$-orbits. To finish the
proof we show that the inclusion%
\[
J(Q,\lambda)_{\text{min}}\subseteq H_{\text{c}}^{q}(U,\mathcal{O}_{\lambda
}\mid_{U})
\]
induces an isomorphism between analytic localization of $J(Q,\lambda
)_{\text{min}}$ and the sheaf
\[
\left(  \mathcal{O}_{\lambda}\mid_{U}\right)  ^{X}\text{.}%
\]
We can then recover our main result by the Hecht-Taylor equivalence of derived categories.

Getting on with the first step of our proof, we continue with the previous
notations. Let
\[
j:\overline{Q}\rightarrow X
\]
denote the inclusion and assume $\overline{Q}$ is smooth. We consider the
$K$-equivariant sheaf
\[
j^{\ast}\left(  \mathcal{O}_{\lambda}^{\text{alg}}\right)
\]
defined on $\overline{Q}$. Observe that $j^{\ast}\left(  \mathcal{O}_{\lambda
}^{\text{alg}}\right)  $ is the sheaf of sections of a $K$-equivariant
algebraic line bundle defined on $\overline{Q}$. Let $\mathcal{D}%
_{\overline{Q},\lambda}^{\text{alg}}$ denote the sheaf on $\overline{Q}$, of
(twisted) differential operators of the invertible sheaf $j^{\ast}\left(
\mathcal{O}_{\lambda}^{\text{alg}}\right)  $ and let $j_{+}$ be the
corresponding direct image functor, Thus
\[
j_{+}j^{\ast}\left(  \mathcal{O}_{\lambda}^{\text{alg}}\right)
\]
is a $K$-equivariant sheaf of $\mathcal{D}_{\lambda}^{\text{alg}}$-modules.

\begin{proposition}
Suppose $\mathcal{O}_{\lambda}^{\text{alg}}$ is the sheaf of sections of a
$K$-equivariant algebraic line bundle on $X$. Let $Q\subseteq X$ be a
$K$-orbit and suppose $\mathcal{I}(Q,\lambda)$ is the corresponding standard
Harish-Chandra sheaf. Assume that the associated variety $\overline{Q}$ is
smooth and let $j:\overline{Q}\rightarrow X$ be the inclusion. Then there
exists a natural isomorphism
\[
j_{+}j^{\ast}\left(  \mathcal{O}_{\lambda}^{\text{alg}}\right)  \cong%
\mathcal{J}(Q,\lambda)\text{.}%
\]

\end{proposition}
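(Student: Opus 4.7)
The approach I would take factors the inclusion $i\colon Q\to X$ as $i=j\circ i'$, where $i'\colon Q\hookrightarrow\overline{Q}$ is the open immersion of $Q$ into its (smooth) Zariski closure. Functoriality of the direct image in the $\mathcal{D}$-module sense gives $i_{+}=j_{+}\circ i'_{+}$, so
$$
\mathcal{I}(Q,\lambda)=i_{+}i^{\ast}(\mathcal{O}_{\lambda}^{\text{alg}})=j_{+}\bigl(i'_{+}i'^{\ast}(j^{\ast}\mathcal{O}_{\lambda}^{\text{alg}})\bigr).
$$
This identity transports the problem of describing $\mathcal{I}(Q,\lambda)$ to the smooth variety $\overline{Q}$, which is what makes smoothness of $\overline{Q}$ essential: the sheaf $j^{\ast}(\mathcal{O}_{\lambda}^{\text{alg}})$ is then a genuine $K$-equivariant line bundle on $\overline{Q}$ and carries the structure of a $\mathcal{D}_{\overline{Q},\lambda}^{\text{alg}}$-module.

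Next, I would exhibit a canonical embedding $j^{\ast}(\mathcal{O}_{\lambda}^{\text{alg}})\hookrightarrow i'_{+}i'^{\ast}(j^{\ast}\mathcal{O}_{\lambda}^{\text{alg}})$ in the category of $\mathcal{D}_{\overline{Q},\lambda}^{\text{alg}}$-modules. Concretely, $i'_{+}i'^{\ast}$ of a coherent $\mathcal{D}$-module along the open immersion $i'$ is its localization away from the boundary $\partial Q=\overline{Q}-Q$, i.e.\ the sheaf of meromorphic sections regular on $Q$, and $j^{\ast}(\mathcal{O}_{\lambda}^{\text{alg}})$ sits inside it as the subsheaf of sections with no poles along $\partial Q$. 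Applying $j_{+}$, which is exact for the closed immersion $j$ of smooth varieties and preserves injections, produces a natural injection
$$
j_{+}j^{\ast}(\mathcal{O}_{\lambda}^{\text{alg}})\hookrightarrow j_{+}\bigl(i'_{+}i'^{\ast}(j^{\ast}\mathcal{O}_{\lambda}^{\text{alg}})\bigr)=\mathcal{I}(Q,\lambda)
$$
of $K$-equivariant $\mathcal{D}_{\lambda}^{\text{alg}}$-modules, the $K$-equivariance being automatic from the functoriality of the construction.

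To conclude, I would argue that $j_{+}j^{\ast}(\mathcal{O}_{\lambda}^{\text{alg}})$ is already irreducible as a $\mathcal{D}_{\lambda}^{\text{alg}}$-module. On the smooth irreducible variety $\overline{Q}$, the sheaf $j^{\ast}(\mathcal{O}_{\lambda}^{\text{alg}})$ is a rank-one integrable connection, so any nonzero $\mathcal{D}_{\overline{Q},\lambda}^{\text{alg}}$-submodule has full generic rank and must equal the whole; hence $j^{\ast}(\mathcal{O}_{\lambda}^{\text{alg}})$ is simple. By Kashiwara's equivalence, the functor $j_{+}$ identifies simple $\mathcal{D}_{\overline{Q},\lambda}^{\text{alg}}$-modules with simple $\mathcal{D}_{\lambda}^{\text{alg}}$-modules supported on $\overline{Q}$, so $j_{+}j^{\ast}(\mathcal{O}_{\lambda}^{\text{alg}})$ is an irreducible $\mathcal{D}_{\lambda}^{\text{alg}}$-submodule of $\mathcal{I}(Q,\lambda)$. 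Since $\mathcal{J}(Q,\lambda)$ is the \emph{unique} irreducible subsheaf of $\mathcal{I}(Q,\lambda)$, the two must coincide, proving the claimed isomorphism.

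The step I expect to require the most care is the second one: verifying that the factorization $i_{+}=j_{+}\circ i'_{+}$ and the canonical inclusion into the localization along $\partial Q$ are correctly formulated in the \emph{twisted} $\mathcal{D}$-module setting, with the compatibilities among the TDOs $\mathcal{D}_{\lambda}^{\text{alg}}$, $\mathcal{D}_{\overline{Q},\lambda}^{\text{alg}}$ and $\mathcal{D}_{Q,\lambda}^{\text{alg}}$ induced by $j^{\ast}$ and $i^{\ast}$, as well as the $K$-equivariance of the transfer bimodule for the closed immersion $j$. Once those formal compatibilities are in place, the irreducibility argument is essentially automatic and the proposition follows.
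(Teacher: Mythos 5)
Your proposal is correct and follows essentially the same route as the paper: the paper also factors $i=j\circ l$ (your $i'$), produces the morphism $j^{\ast}(\mathcal{O}_{\lambda}^{\text{alg}})\rightarrow l_{\ast}i^{\ast}(\mathcal{O}_{\lambda}^{\text{alg}})$ as the unit of the same adjunction you describe, and concludes via Kashiwara's equivalence (irreducibility of $j_{+}j^{\ast}(\mathcal{O}_{\lambda}^{\text{alg}})$) together with the uniqueness of the irreducible subsheaf of $\mathcal{I}(Q,\lambda)$. The only cosmetic difference is that you argue the map is injective, whereas the paper only needs it to be nonzero.
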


\begin{proof}
Let
\[
l:Q\rightarrow\overline{Q}%
\]
be the inclusion and recall that $i:Q\rightarrow X$. Since $Q$ is open in
$\overline{Q}$, it is clear that
\[
j^{\ast}\left(  \mathcal{O}_{\lambda}^{\text{alg}}\right)  \mid_{Q}\cong
i^{\ast}\left(  \mathcal{O}_{\lambda}^{\text{alg}}\right)
\]
as sheaves of $K$-equivariant $\mathcal{D}_{Q,\lambda}^{\text{alg}}$-modules.
It also follows that the direct image $l_{+}$ coincides with the direct image
$l_{\bullet}$ in the category of sheaves. By the adjointness property of the
direct image
\[
\text{Hom}\left(  j^{\ast}\left(  \mathcal{O}_{\lambda}^{\text{alg}}\right)
,l_{\ast}i^{\ast}\left(  \mathcal{O}_{\lambda}^{\text{alg}}\right)  \right)
\cong\text{Hom}\left(  j^{\ast}\left(  \mathcal{O}_{\lambda}^{\text{alg}%
}\right)  \mid_{Q},i^{\ast}\left(  \mathcal{O}_{\lambda}^{\text{alg}}\right)
\right)
\]
so the isomorphism above determines a nonzero morphism
\[
j^{\ast}\left(  \mathcal{O}_{\lambda}^{\text{alg}}\right)  \rightarrow
l_{\ast}i^{\ast}\left(  \mathcal{O}_{\lambda}^{\text{alg}}\right)
\]
of $K$-equivariant $\mathcal{D}_{\overline{Q},\lambda}^{\text{alg}}$-modules.
Since $\overline{Q}$ is a closed, smooth subvariety of $X$, Kashiwara's
equivalence of categories says that the direct image $j_{+}$ establishes an
equivalence between the category of coherent $\mathcal{D}_{\overline
{Q},\lambda}^{\text{alg}}$-modules and the category of coherent $\mathcal{D}%
_{\lambda}^{\text{alg}}$-modules with support on $\overline{Q}$. Thus we have
a nonzero morphism
\[
j_{+}j^{\ast}\left(  \mathcal{O}_{\lambda}^{\text{alg}}\right)  \rightarrow
j_{+}l_{\ast}i^{\ast}\left(  \mathcal{O}_{\lambda}^{\text{alg}}\right)  \cong
i_{+}i^{\ast}\left(  \mathcal{O}_{\lambda}^{\text{alg}}\right)  =\mathcal{I}%
(Q,\lambda)
\]
this last isomorphism since $i=j\circ l$. Now we simply observe that $j^{\ast
}\left(  \mathcal{O}_{\lambda}^{\text{alg}}\right)  $ is an irreducible
$\mathcal{D}_{\overline{Q},\lambda}^{\text{alg}}$-module so that $j_{+}%
j^{\ast}\left(  \mathcal{O}_{\lambda}^{\text{alg}}\right)  $ is also
irreducible (once again by Kashiwara's equivalence), Since the morphism we
have defined is nonzero and since $i_{+}i^{\ast}\left(  \mathcal{O}_{\lambda
}^{\text{alg}}\right)  $ has a unique irreducible coherent subsheaf, the
proposition is proven.
\end{proof}

\section{Analytic localization and comparison}

We are now ready to introduce the analytic localization. The Hecht-Taylor
version of the localization functor is built around the topology of the
minimal globalization. One the one hand, Hecht and Taylor consider topological
$U_{\Theta}$-modules that have a DNF (dual nuclear Fr\'{e}chet) topology,
where morphims are continuous morphisms of modules, and on the other hand they
define the concept of a DNF sheaf of $\mathcal{D}_{\lambda}$-modules with an
accompanying concept of continuous morphisms of DNF sheaves of modules. For
$\lambda\in\Theta$, the topological localization
\[
\Delta_{\lambda}^{\text{an}}(M)=\mathcal{D}_{\lambda}\widehat{\otimes
}_{U_{\Theta}}(M)
\]
does not have very interesting results, but since free resolutions of DNF
modules are complexes of DNF modules, using these sorts of resolutions one can
define a derived functor $L\Delta_{\lambda}^{\text{an}}$. In particular, the
analytic localization takes complexes of DNF $U_{\Theta}$-modules to complexes
of DNF sheaves of $\mathcal{D}_{\lambda}$-modules (by applying $\Delta
_{\lambda}^{\text{an}}$ to the corresponding free resolutions). On the other
side of the equation, by using Cech resolutions, Hecht and Taylor show there
are enough injectives within the category of DNF sheaves of $\mathcal{D}%
_{\lambda}$-modules. Thus, by applying the global sections to injective
resolutions, one can define a derived global sections functor on complexes of
DNF sheaves of $\mathcal{D}_{\lambda}$-modules . The result is then a complex
of DNF $U_{\Theta}$-modules. On appropriately defined derived categories, for
$\lambda$ regular, it is not hard to show the derived functors $L\Delta
_{\lambda}^{\text{an}}$ and $R\Gamma$ are mutual inverses.

In general one does not know about the homology groups of the analytic
localization of a complex of DNF $U_{\Theta}$-modules: these homology groups
may very well not be DNF sheaves (although they will be $\mathcal{D}_{\lambda
}$-modules). However the homology groups of the analytic localization of a
minimal globalization $M$ (any DNF\ $U_{\Theta}$-module can be thought of as a
complex which is zero in all degrees except zero ) turn out to be DNF sheaves
of $\mathcal{D}_{\lambda}$-modules of a very special sort. In order to explain
this, we introduce the concept of \emph{the geometric fiber }of a sheaf of
$\mathcal{O}_{X}$-modules. In particular, if $\mathcal{F}$ is a sheaf of
$\mathcal{O}_{X}$-modules and $x\in X$ then we define \emph{the geometric
fiber }$T_{x}(\mathcal{F})$ \emph{of} $\mathcal{F}$ \emph{at} $x$ by
\[
T_{x}(\mathcal{F})=\mathbb{C\otimes}_{\mathcal{O}_{X,x}}\mathcal{F}_{x}%
\]
where $\mathcal{O}_{X,x}$ and $\mathcal{F}_{x}$ denote the corresponding
stalks of these sheaves at $x$ and where $\mathcal{O}_{X,x}$ acts on
$\mathbb{C}$ by evaluation at $x$. Then, letting $G_{0}\left[  x\right]  $ is
the stabilizer of $x$ in $G_{0}$, Hecht and Taylor show (for $\lambda$
regular) that the geometric fiber
\[
T_{x}\left(  L_{p}\Delta_{\lambda}^{\text{an}}(M)\right)
\]
of the $p$-th homology group $L_{p}\Delta_{\lambda}^{\text{an}}(M)$ of the
analytic localization of the minimal globalization $M$ is a finite-dimensional
(continuous) $(\mathfrak{h}_{x}$, $G_{0}\left[  x\right]  )$-module, where
$\mathfrak{h}_{x}$ acts by the evaluation of $\lambda+\rho\in\mathfrak{h}%
^{\ast}$ to $x$, and that the restriction of $L_{p}\Delta_{\lambda}%
^{\text{an}}(M)$ \ to the $G_{0}$-orbit $S$ of $x$ is the sheaf of (restricted
holomorphic) sections of the corresponding homogeneous vector bundle over $S$.

When $\Theta$ is a regular and $\lambda\in\Theta$ is antidominant then
\emph{the comparison theorem} \cite{hecht2} provides a way to understand the
analytic localization $L\Delta_{\lambda}^{\text{an}}(M)$ of a minimal
globalization $M$, with infinitesimal character $\Theta$, assuming one
understands the (derived) geometric fibers of the localization
\[
\Delta_{\lambda}(M_{\text{HC}})
\]
of the underlying Harish-Chandra module $M_{\text{HC}}$ of $M$. To explain
this result, we introduce the geometric fiber
\[
T_{x}^{\text{alg}}(\mathcal{F})=\mathbb{C\otimes}_{\mathcal{O}_{X,x}%
^{\text{alg}}}\mathcal{F}_{x}%
\]
of a sheaf of $\mathcal{O}_{X}^{\text{alg}}$-modules $\mathcal{F}$ at $x\in
X$. We note $T_{x}^{\text{alg}}$ defines a left exact functor on (for example)
on the category of quasicoherent $\mathcal{D}_{\lambda}^{\text{alg}}$-modules
and there are corresponding derived functors $L_{p}T_{x}^{\text{alg}}$. When
$M_{\text{HC}}$ is a Harish-Chandra module with infinitesimal character
$\Theta$ and $\lambda\in\Theta$ is antidominant and regular then Beilinson and
Bernstein have shown that the $\mathfrak{h}_{x}$-modules
\[
L_{p}T_{x}^{\text{alg}}\Delta_{\lambda}(M_{\text{HC}})
\]
are finite-dimensional (algebraic) $(\mathfrak{h}_{x}$, $K\left[  x\right]
)$-modules, where $K\left[  x\right]  $ is the stabilizer of $x$ in $K$ and
$\mathfrak{h}_{x}$ acts by the evaluation of $\lambda+\rho\in\mathfrak{h}%
^{\ast}$ to $x$. The comparison theorem says that when $x$ is a special point,
then there exists a natural equivalence between the finite-dimensional
$(\mathfrak{h}_{x}$, $G_{0}\left[  x\right]  )$-modules and the
finite-dimensional $(\mathfrak{h}_{x}$, $K\left[  x\right]  )$-modules (given
by the $(\mathfrak{h}_{x}$, $K_{0}\left[  x\right]  )$-structure) and that
there is a natural isomorphism
\[
L_{p}T_{x}^{\text{alg}}\Delta_{\lambda}(M_{\text{HC}})\cong T_{x}\left(
L_{p}\Delta_{\lambda}^{\text{an}}(M)\right)
\]
of $(\mathfrak{h}_{x}$, $K_{0}\left[  x\right]  )$-modules. We note that a
more general comparison theorem, described in exactly these terms is proved in
\cite[Theorem 7.2]{bratten3}.

We are now ready to prove our main result.

\begin{theorem}
Suppose $\mathcal{O}_{\lambda}$ is the sheaf of holomorphic sections of a
$G_{0}$-equivariant line bundle with regular antidominant parameter
$\lambda\in\mathfrak{h}^{\ast}$. Let $S\subseteq X$ be a $G_{0}$-orbit with
vanishing number $q$ and let $U\supseteq S$ be the smallest $G_{0}$-invariant
open submanifold that contains $S$. Let $Q$ be the $K$-orbit that is Matsuki
dual to $S$ and suppose the associated variety $\overline{Q}$ is smooth. Then
the sheaf cohomology groups
\[
H_{\text{c}}^{p}(U,\mathcal{O}_{\lambda}\mid_{U})
\]
vanish except in degree $q$ in which case $H_{\text{c}}^{q}(U,\mathcal{O}%
_{\lambda}\mid_{U})$ is the unique irreducible submodule of the standard
module $H_{\text{c}}^{q}(S,\mathcal{O}_{\lambda}\mid_{S})$.
\end{theorem}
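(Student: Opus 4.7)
The plan is to combine the geometric description of $\mathcal{J}(Q,\lambda)$ given in Proposition 4.2 with the Hecht--Taylor comparison theorem in order to determine the analytic localization of $J(Q,\lambda)_{\min}$, and then to apply the Hecht--Taylor equivalence of derived categories to read off $H_{\text{c}}^{p}(U,\mathcal{O}_\lambda|_U)$. Since Proposition 4.1 already provides a natural inclusion $J(Q,\lambda)_{\min}\subseteq H_{\text{c}}^{q}(U,\mathcal{O}_\lambda|_U)$, the theorem amounts to promoting this inclusion to an equality while simultaneously forcing the vanishing of all other compactly supported cohomology groups.

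First, because $\lambda$ is antidominant and regular the Beilinson--Bernstein equivalence gives $\Delta_\lambda(J(Q,\lambda))\cong\mathcal{J}(Q,\lambda)$, and Proposition 4.2 identifies this sheaf with $j_{+}j^{\ast}(\mathcal{O}_\lambda^{\text{alg}})$, where $j:\overline{Q}\hookrightarrow X$ is the closed embedding of the (assumed smooth) associated variety. I would then compute the derived geometric fibers $L_{p}T_{x}^{\text{alg}}\mathcal{J}(Q,\lambda)$ at special points $x$. Since $\overline{Q}$ is smooth, $j^{\ast}(\mathcal{O}_\lambda^{\text{alg}})$ is a $K$-equivariant line bundle, viewed as a locally free connection on $\overline{Q}$. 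A Koszul-type calculation for this closed embedding of smooth varieties, combined with Kashiwara's equivalence, then shows that these fibers vanish when $x\notin\overline{Q}$, and for $x\in\overline{Q}$ they collapse to a single cohomological degree and reproduce the fiber of $j^{\ast}(\mathcal{O}_\lambda^{\text{alg}})$ at $x$ as a $(\mathfrak{h}_{x},K\left[x\right])$-module.

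Invoking the Hecht--Taylor comparison theorem (together with the natural equivalence between finite-dimensional $(\mathfrak{h}_{x},K\left[x\right])$-modules and $(\mathfrak{h}_{x},G_{0}\left[x\right])$-modules at a special point $x$), the same data computes the geometric fibers of the homology sheaves $L_{p}\Delta_\lambda^{\text{an}}(J(Q,\lambda)_{\min})$, and thereby determines the restriction of each homology sheaf to its ambient $G_{0}$-orbit as the sheaf of sections of a $G_{0}$-homogeneous holomorphic bundle. By Matsuki duality, a $G_{0}$-orbit $S^{\prime}$ is contained in $U$ precisely when its dual $K$-orbit lies in $\overline{Q}$, so these orbit-wise restrictions assemble to an isomorphism in the derived category,
\[
L\Delta_\lambda^{\text{an}}(J(Q,\lambda)_{\min})\;\cong\;(\mathcal{O}_\lambda|_U)^{X}
\]
placed in the single degree dictated by the vanishing number $q$. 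Applying $R\Gamma$ and using $R\Gamma\circ L\Delta_\lambda^{\text{an}}\cong\mathrm{id}$ then yields the desired vanishings $H_{\text{c}}^{p}(U,\mathcal{O}_\lambda|_U)=0$ for $p\neq q$ and the identification $H_{\text{c}}^{q}(U,\mathcal{O}_\lambda|_U)\cong J(Q,\lambda)_{\min}$, which is exactly the unique irreducible submodule of $H_{\text{c}}^{q}(S,\mathcal{O}_\lambda|_S)$.

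The main obstacle I anticipate is the uniform derived-fiber calculation across orbits, and in particular the careful tracking of the cohomological shifts coming from the varying codimensions of the $K$-orbits $Q^{\prime}\subseteq\overline{Q}$ (whose codimensions in $X$ coincide with the vanishing numbers of the corresponding $G_{0}$-orbits in $U$). Only after this bookkeeping is in place can one upgrade the orbit-by-orbit matching of homology sheaves to a genuine isomorphism of complexes in the derived category, which is what the final application of $R\Gamma$ through the Hecht--Taylor equivalence demands.
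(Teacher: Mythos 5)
Your overall architecture matches the paper's: identify $\Delta_{\lambda}(J(Q,\lambda))\cong\mathcal{J}(Q,\lambda)\cong j_{+}j^{\ast}(\mathcal{O}_{\lambda}^{\text{alg}})$ via Proposition 4.2, compute the derived geometric fibers (base change/Koszul for the smooth closed embedding $j$), transfer this through the comparison theorem to conclude that $L_{p}\Delta_{\lambda}^{\text{an}}(J(Q,\lambda)_{\text{min}})$ vanishes for $p\neq q$, and finish with the Hecht--Taylor equivalence. But there is a genuine gap at the crucial step where you say the orbit-wise restrictions ``assemble to an isomorphism in the derived category'' $L\Delta_{\lambda}^{\text{an}}(J(Q,\lambda)_{\text{min}})\cong(\mathcal{O}_{\lambda}\mid_{U})^{X}$. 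Knowing that the single surviving homology sheaf restricts on each $G_{0}$-orbit in $U$ to the induced equivariant line-bundle sheaf with the correct fiber, and vanishes off $U$, does not by itself produce a global isomorphism: a sheaf with the same orbit-by-orbit restrictions could a priori be glued differently across orbit closures (for instance, a direct sum of extensions by zero over the orbits has identical orbit restrictions). What is needed, and what the paper supplies, is an actual comparison morphism: starting from the inclusion of Proposition 4.1 and the vanishing of $H_{\text{c}}^{p}(U,\mathcal{O}_{\lambda}\mid_{U})$ for $p<q$, a truncation argument yields a nonzero map $J(Q,\lambda)_{\text{min}}[-q]\rightarrow R\Gamma\left((\mathcal{O}_{\lambda}\mid_{U})^{X}\right)$ in the derived category; applying $L\Delta_{\lambda}^{\text{an}}$ gives a concrete morphism $L_{q}\Delta_{\lambda}^{\text{an}}(J(Q,\lambda)_{\text{min}})\rightarrow(\mathcal{O}_{\lambda}\mid_{U})^{X}$ of equivariant DNF sheaves of $\mathcal{D}_{\lambda}$-modules. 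One then proves this particular map is an isomorphism: on each orbit it is either zero or an isomorphism (both sides being induced rank-one equivariant sheaves), it is nonzero along $S$ because the composition $J(Q,\lambda)_{\text{min}}\rightarrow H_{\text{c}}^{q}(U,\mathcal{O}_{\lambda}\mid_{U})\rightarrow H_{\text{c}}^{q}(S,\mathcal{O}_{\lambda}\mid_{S})$ is nonzero, and the locus where the stalk map is nonzero is open and $G_{0}$-invariant, hence equals $U$ by the minimality of $U$. This openness-plus-minimality argument is the missing idea in your write-up.

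A secondary point: the obstacle you flag, namely tracking cohomological shifts coming from the varying codimensions of the $K$-orbits $Q^{\prime}\subseteq\overline{Q}$, is not actually an issue. Since $\overline{Q}$ is smooth and irreducible of pure codimension $q$, the base-change computation of $L_{p}T_{x}^{\text{alg}}\left(j_{+}j^{\ast}(\mathcal{O}_{\lambda}^{\text{alg}})\right)$ is uniform: the fibers are concentrated in degree $q$ at every point of $\overline{Q}$ (and vanish off $\overline{Q}$), regardless of which $K$-orbit the point lies in. The smaller orbits' codimensions play no role here; the real work is the construction and analysis of the comparison morphism described above.
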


\begin{proof}
Utilizing the notation from the previous section, we know that $H_{\text{c}%
}^{q}(S,\mathcal{O}_{\lambda}\mid_{S})$ is the minimal globalization of the
standard Beilinson-Bernstein module $I(Q,\lambda)$. Let $J(Q,\lambda)\subseteq
I(Q,\lambda)$ be the corresponding unique irreducible Harish-Chandra submodule
and $J(Q,\lambda)_{\text{min}}$ its minimal globalization. Consider the
complex
\[
R\Gamma\left(  \left(  \mathcal{O}_{\lambda}\mid_{U}\right)  ^{X}\right)
\]
of DNF $U_{\Theta}$-modules and let $J(Q,\lambda)_{\text{min}}\left[
-q\right]  $ denote the complex which has zeros in all grades except $q$ where
we have the module $J(Q,\lambda)_{\text{min}}$. The point of our proof is to
present a nonzero morphism in the derived category
\[
J(Q,\lambda)_{\text{min}}\left[  -q\right]  \rightarrow R\Gamma\left(  \left(
\mathcal{O}_{\lambda}\mid_{U}\right)  ^{X}\right)
\]
such that the induced morphism
\[
L\Delta_{\lambda}^{\text{an}}\left(  J(Q,\lambda)_{\text{min}}\left[
-q\right]  \right)  \rightarrow L\Delta_{\lambda}^{\text{an}}\left(
R\Gamma\left(  \left(  \mathcal{O}_{\lambda}\mid_{U}\right)  ^{X}\right)
\right)  \cong\left(  \mathcal{O}_{\lambda}\mid_{U}\right)  ^{X}\left[
0\right]
\]
is an isomorphism (we include the place holder $\left[  0\right]  $ to
emphasize we think of the sheaf $\left(  \mathcal{O}_{\lambda}\mid_{U}\right)
^{X}$ as a complex concentrated in degree zero). By the equivalence of derived
categories we will thus obtain an isomorphism
\[
R\Gamma\left(  \left(  \mathcal{O}_{\lambda}\mid_{U}\right)  ^{X}\right)
\cong J(Q,\lambda)_{\text{min}}\left[  -q\right]
\]
which is the desired result. 

To present a morphism in the derived category, recall by Proposition 4.1 we
have a natural inclusion
\[
J(Q,\lambda)_{\text{min}}\rightarrow H_{\text{c}}^{q}(U,\mathcal{O}_{\lambda
}\mid_{U}).
\]
Since the sheaf cohomology groups of $\left(  \mathcal{O}_{\lambda}\mid
_{U}\right)  ^{X}$ vanish in degrees smaller that $q$, a standard truncation
argument provides a nonzero morphism in the derived category
\[
H_{\text{c}}^{q}(U,\mathcal{O}_{\lambda}\mid_{U})\left[  -q\right]
\rightarrow R\Gamma\left(  \left(  \mathcal{O}_{\lambda}\mid_{U}\right)
^{X}\right)  .
\]
Composing this morphism with the inclusion gives the desired result.

We now want to show that the cohomology groups of the complex $L\Delta
_{\lambda}^{\text{an}}\left(  J(Q,\lambda)_{\text{min}}\left[  -q\right]
\right)  $ vanish except in degree zero. That is: we want to calculate the
homology groups
\[
L_{p}\Delta_{\lambda}^{\text{an}}\left(  J(Q,\lambda)_{\text{min}}\right)
\]
and see they vanish except in degree $q$. To do this we use the comparison
theorem. So we need to calculate the derived geometric fibers of the sheaf
\[
\Delta_{\lambda}(J(Q,\lambda))=\mathcal{J}(Q,\lambda).
\]
Let $j:\overline{Q}\rightarrow X$ denote the inclusion. Since $\overline{Q}$
is smooth we have
\[
\mathcal{J}(Q,\lambda)\cong j_{+}j^{\ast}\left(  \mathcal{O}_{\lambda
}^{\text{alg}}\right)  .
\]
Thus, calculating the geometric fibers
\[
L_{p}T_{x}^{\text{alg}}\Delta_{\lambda}\left(  J(Q,\lambda\right)  \cong
L_{p}T_{x}^{\text{alg}}\left(  j_{+}j^{\ast}\left(  \mathcal{O}_{\lambda
}^{\text{alg}}\right)  \right)
\]
is a straightforward application of the base change formula for the direct
image in the category of TDOs. In particular, since the codimension of
$\overline{Q}$ in $X$ is $q$. It follows that, for each $x\in X$ that
\[
L_{p}T_{x}^{\text{alg}}\Delta_{\lambda}\left(  J(Q,\lambda\right)  =0\text{ if
}p\neq q\text{.}%
\]
When $x\in$ $\overline{Q}$ then
\[
L_{q}T_{x}^{\text{alg}}\Delta_{\lambda}\left(  J(Q,\lambda\right)  \cong
L_{q}T_{x}^{\text{alg}}\left(  j_{+}j^{\ast}\left(  \mathcal{O}_{\lambda
}^{\text{alg}}\right)  \right)  \cong T_{x}^{\text{alg}}\left(  \mathcal{O}%
_{\lambda}^{\text{alg}}\right)
\]
and when $x\notin\overline{Q}$ then
\[
L_{q}T_{x}^{\text{alg}}\Delta_{\lambda}\left(  J(Q,\lambda\right)  =0.
\]
In particular, if we let
\[
V=\Gamma(X,\mathcal{O}_{\lambda}^{\text{alg}})
\]
be the corresponding irreducible finite-dimensional $(\mathfrak{g},K)$-module
then for each special point $x\in\overline{Q}$ we have
\[
L_{q}T_{x}^{\text{alg}}\Delta_{\lambda}\left(  J(Q,\lambda\right)  \cong
V/\mathfrak{n}_{x}V
\]
as $(\mathfrak{h}_{x},K\left[  x\right]  )$-module.

By the comparison theorem, it follows that the homology groups
\[
L_{p}\Delta_{\lambda}^{\text{an}}\left(  J(Q,\lambda)_{\text{min}}\right)
\]
vanish except when $p=q$ and this, in turn implies that the complex
$L\Delta_{\lambda}^{\text{an}}\left(  J(Q,\lambda)_{\text{min}}\left[
-q\right]  \right)  $ is quasi isomorphic to the complex
\[
L_{q}\Delta_{\lambda}^{\text{an}}\left(  J(Q,\lambda)_{\text{min}}\right)
\left[  0\right]
\]
which is nonzero only in degree $0$. Hence the nonzero morphism
\[
L\Delta_{\lambda}^{\text{an}}\left(  J(Q,\lambda)_{\text{min}}\left[
-q\right]  \right)  \rightarrow\left(  \mathcal{O}_{\lambda}\mid_{U}\right)
^{X}\left[  0\right]
\]
in the derived category reduces to a nonzero morphism
\[
L_{q}\Delta_{\lambda}^{\text{an}}\left(  J(Q,\lambda)_{\text{min}}\right)
\rightarrow\left(  \mathcal{O}_{\lambda}\mid_{U}\right)  ^{X}%
\]
of $G_{0}$-equivariant DNF sheaves of $\mathcal{D}_{\lambda}$-modules. We will
prove that this morphism is an isomorphism. In particular, if $O$ is a $G_{0}%
$-orbit in $U$ then since $L_{q}\Delta_{\lambda}^{\text{an}}\left(
J(Q,\lambda)_{\text{min}}\right)  \mid_{O}$ and $\mathcal{O}_{\lambda}\mid
_{O}$ are both induced equivariant sheaves it follows that the restricted
morphism
\[
L_{q}\Delta_{\lambda}^{\text{an}}\left(  J(Q,\lambda)_{\text{min}}\right)
\mid_{O}\rightarrow\mathcal{O}_{\lambda}\mid_{O}%
\]
is either an isomorphism or zero. We remark that these limited possibilities
for the restricted morphism can also be deduced from the fact that we have a
morphism of $\left(  \mathcal{D}_{\lambda}\mid_{O}\right)  $-modules and both
objects are locally free rank one sheaves of $\left(  \mathcal{O}_{X}\mid
_{O}\right)  $-modules. Indeed, if we knew a priori that $L_{q}\Delta
_{\lambda}^{\text{an}}\left(  J(Q,\lambda)_{\text{min}}\right)  $ was a
locally free sheaf of $\mathcal{O}_{U}$-modules it would follow immediately
from standard $\mathcal{D}$-module theory \cite[1.4.10]{hotta} that a nonzero
morphism of $\mathcal{D}_{\lambda}$-modules would be an isomorphism. 

Define
\[
W=\left\{  x\in U:\text{the induced morphism }L_{q}\Delta_{\lambda}%
^{\text{an}}\left(  J(Q,\lambda)_{\text{min}}\right)  _{x}\rightarrow\left(
\mathcal{O}_{\lambda}\right)  _{x}\text{ is nonzero }\right\}  \text{.}%
\]
We will show that $W$ is an open set that contains $S$. Since $G_{0}$ acts on
$W$ and since $U$ is the smallest $G_{0}$-invariant open set that contains $S$
it will follow from our previous remarks that the morphism in question is an
isomorphism. 

Consider the composition
\[
L_{q}\Delta_{\lambda}^{\text{an}}\left(  J(Q,\lambda)_{\text{min}}\right)
\rightarrow\left(  \mathcal{O}_{\lambda}\mid_{U}\right)  ^{X}\rightarrow
\left(  \mathcal{O}_{\lambda}\mid_{S}\right)  ^{X}%
\]
where the second morphism is the canonical one. Since these morphisms induce
the nonzero composition
\[
J(Q,\lambda)_{\text{min}}\rightarrow H_{\text{c}}^{q}(U,\mathcal{O}_{\lambda
}\mid_{U})\rightarrow H_{\text{c}}^{q}(S,\mathcal{O}_{\lambda}\mid_{S})
\]
it follows that the restricted morphism
\[
L_{q}\Delta_{\lambda}^{\text{an}}\left(  J(Q,\lambda)_{\text{min}}\right)
\mid_{S}\rightarrow\mathcal{O}_{\lambda}\mid_{S}%
\]
is an isomorphism and $S\subseteq W$. To show $W$ is open suppose $x\in W$.
Since $\mathcal{O}_{\lambda}$ is a locally free rank one sheaf of
$\mathcal{O}_{X}$-modules there is a local section $\sigma$ of $\mathcal{O}%
_{\lambda}$, defined on a neighborhood of $x$ such that every local section
has the form $f\sigma$ where $f$ is a holomorphic function. Since the induced
morphism on the geometric fibre
\[
T_{x}\left(  L_{q}\Delta_{\lambda}^{\text{an}}\left(  J(Q,\lambda
)_{\text{min}}\right)  \right)  \rightarrow T_{x}\left(  \mathcal{O}_{\lambda
}\right)
\]
is nonzero it follows that for some open set $W_{1}$, that contains $x$, there
is a holomorphic function $f$, defined on $W_{1}$ such that $f(x)\neq0$, and a
local section in $\Gamma\left(  W_{1},L_{q}\Delta_{\lambda}^{\text{an}}\left(
J(Q,\lambda)_{\text{min}}\right)  \right)  $ that maps onto $f\sigma$. Thus
\[
W_{2}=\left\{  z\in W_{1}:f(z)\neq0\right\}
\]
is an open set such that $x\in W_{2}\subseteq W$ and we have finished the
proof.   
\end{proof}

\section{Some additional considerations}

\subsection{A tensoring argument}

We maintain the notations from the previous section. In particular, $S$
denotes a $G_{0}$-orbit in $X$ and $Q$ is the $K$-orbit that is Matsuki dual
to $S$. $\mathcal{O}_{\lambda}$ is the sheaf of holomorphic sections of a
$G_{0}$-equivariant line bundle on $X$, and so on. When the parameter
$\lambda\in\mathfrak{h}^{\ast}$ is antidominant then it may be the case that
the Harish-Chandra module
\[
J(Q,\lambda)=\Gamma(X,\mathcal{J}(Q,\lambda))
\]
is zero. However, when $J(Q,\lambda)\neq0$ then it is the unique irreducible
submodule of the standard Beilinson-Bernstein module $I(Q,\lambda)$. When
$\lambda$ is antidominant and $J(Q,\lambda)\neq0$ we will refer to
$I(Q,\lambda)$ (as well as its minimal globalization $H_{\text{c}}%
^{q}(S,\mathcal{O}_{\lambda}\mid_{S})$) as a \emph{classifying module}. Let
$U$ be the smallest $G_{0}$-invariant open submanifold that contains $S$.
Under the assumption that the associated variety $\overline{Q}$ is smooth and
$\lambda$ is antidominant we can give the following tensoring argument that
shows $H_{\text{c}}^{q}(U,\mathcal{O}_{\lambda}\mid_{U})$ is the minimal
globalization of $J(Q,\lambda)$. Hence when $H_{\text{c}}^{q}(S,\mathcal{O}%
_{\lambda}\mid_{S})$ is a classifying module it follows that $H_{\text{c}}%
^{q}(U,\mathcal{O}_{\lambda}\mid_{U})$ is the unique irreducible submodule.

\begin{lemma}
Maintain the previous notations and assume $\lambda\in\mathfrak{h}^{\ast}$ is
antidominant. Suppose that the associated variety $\overline{Q}$ is smooth.
Then the sheaf cohomology groups
\[
H_{\text{c}}^{p}(U,\mathcal{O}_{\lambda}\mid_{U})
\]
vanish except in degree $q$ in which case $H_{\text{c}}^{q}(U,\mathcal{O}%
_{\lambda}\mid_{U})$ is the minimal globalization of $J(Q,\lambda)$.
\end{lemma}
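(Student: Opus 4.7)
The plan is to reduce Lemma 6.1 to Theorem 5.1 through a tensoring/translation argument in the spirit of \cite{bratten3}. The direct strategy of Theorem 5.1 relies on the Hecht-Taylor equivalence of derived categories, which requires $\lambda$ to be regular; for singular antidominant $\lambda$ we must pass to a regular antidominant parameter and translate back.

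Choose a finite-dimensional irreducible $(\mathfrak{g},G_{0})$-module $V$ whose lowest weight $\mu_{V}$ is sufficiently antidominant regular that $\lambda':=\lambda+\mu_{V}$ is antidominant regular, and with $\mu_{V}$ chosen generically so that the only weights $\mu$ of $V$ for which $\lambda+\mu$ lies in the Weyl orbit $W\lambda'$ are the weights $w\mu_{V}$ with $w$ in the stabilizer $W_{\lambda}$ of $\lambda$ in $W$. I would then exploit the $G_{0}$-equivariant filtration of $\mathcal{O}_{\lambda}\otimes_{\mathbb{C}}V$ induced by a $B$-stable flag of $V$, whose successive quotients are the line bundles $\mathcal{O}_{\lambda+\mu}$ for $\mu$ running over the weights of $V$. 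Since $V$ is finite-dimensional there is a natural isomorphism $H_{\text{c}}^{p}(U,\mathcal{O}_{\lambda}\mid_{U}\otimes V)\cong H_{\text{c}}^{p}(U,\mathcal{O}_{\lambda}\mid_{U})\otimes V$. Now apply the $Z(\mathfrak{g})$-projection onto the generalized eigenspace for the infinitesimal character $W\lambda'$: on the left side, the projection realizes the Zuckerman translation $T^{W\lambda'}_{W\lambda}$ applied to $H_{\text{c}}^{p}(U,\mathcal{O}_{\lambda}\mid_{U})$; on the right side, by the genericity condition only the subquotients with $\mu\in W_{\lambda}\mu_{V}$ survive, and a long-exact-sequence analysis of the filtration collapses the result into $|W_{\lambda}|$ copies of $H_{\text{c}}^{p}(U,\mathcal{O}_{\lambda'}\mid_{U})$. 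Theorem 5.1 applied to $\lambda'$ then guarantees this vanishes for $p\neq q$ and equals $J(Q,\lambda')_{\min}^{\oplus|W_{\lambda}|}$ for $p=q$.

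The standard translation principle says that $T^{W\lambda'}_{W\lambda}$ sends $J(Q,\lambda)_{\min}$ to $J(Q,\lambda')_{\min}$ with the same multiplicity $|W_{\lambda}|$ on minimal globalizations, and is sufficiently faithful on composition factors of standard modules to detect vanishing. Combined with the inclusion $J(Q,\lambda)_{\min}\subseteq H_{\text{c}}^{q}(U,\mathcal{O}_{\lambda}\mid_{U})$ obtained by extending the argument of Proposition 4.1 to the singular case when $J(Q,\lambda)\neq0$, this forces $H_{\text{c}}^{p}(U,\mathcal{O}_{\lambda}\mid_{U})$ to vanish for $p\neq q$ and to equal $J(Q,\lambda)_{\min}$ for $p=q$; the case $J(Q,\lambda)=0$ is handled in the same way since the translated module also vanishes. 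The main obstacle is the careful bookkeeping of multiplicities under the $Z(\mathfrak{g})$-projection applied to the filtration, and verifying that the Zuckerman translation is compatible with the DNF topology on minimal globalizations in the Hecht-Taylor formalism, so that the analytic-side computation cleanly matches the algebraic translation principle.
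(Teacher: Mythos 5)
Your overall instinct (reduce to Theorem 5.1 by a finite-dimensional tensoring argument) is the right one, but the direction you translate in makes the key step fail. You pass from the singular parameter $\lambda$ \emph{out of the wall} to $\lambda'=\lambda+\mu_{V}$ and then project $\mathcal{O}_{\lambda}\mid_{U}\otimes V$ onto the infinitesimal character $W\lambda'$. The subquotients of the weight filtration that survive this projection are the line bundles $\mathcal{O}_{\lambda+w\mu_{V}}=\mathcal{O}_{w\lambda'}$ for $w\in W_{\lambda}$, and for $w\neq e$ these are genuinely different twists: $w\lambda'$ is regular but \emph{not} antidominant, so Theorem 5.1 gives no information about $H_{\text{c}}^{p}(U,\mathcal{O}_{w\lambda'}\mid_{U})$, and there is no reason these groups agree with $H_{\text{c}}^{p}(U,\mathcal{O}_{\lambda'}\mid_{U})$ (Bott-type degree shifts are exactly what one expects). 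Hence the claimed collapse of the filtration into $|W_{\lambda}|$ copies of $H_{\text{c}}^{p}(U,\mathcal{O}_{\lambda'}\mid_{U})$ is unjustified, and with it the vanishing statement. The subsequent appeal to the translation principle is also problematic: it is translation \emph{to} the wall that carries irreducibles to irreducibles or zero; translation out of the wall applied to $J(Q,\lambda)_{\min}$ is in general neither semisimple nor a multiple of $J(Q,\lambda')_{\min}$, so the asserted identity with multiplicity $|W_{\lambda}|$ is not a standard fact. Finally, the embedding of Proposition 4.1 that you propose to ``extend to the singular case'' is proved using the Beilinson--Bernstein equivalence, which needs regularity, so that extension is itself a gap rather than a footnote.

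The paper's argument runs the translation the other way, which removes all of these difficulties. One picks $F^{\mu}$ with highest weight $\mu$ so that $\lambda-\mu$ is antidominant and \emph{regular}, and uses the two projection isomorphisms
\[
\left(\mathcal{O}_{\lambda-\mu}\mid_{U}\otimes F^{\mu}\right)_{\Theta}\cong\mathcal{O}_{\lambda}\mid_{U},\qquad
\left(\mathcal{J}(Q,\lambda-\mu)\otimes F^{\mu}\right)_{\Theta}\cong\mathcal{J}(Q,\lambda),
\]
with $\Theta=W\lambda$: projecting onto the \emph{singular} character picks out the single weight $\mu$, so exactly one copy appears and no multiplicity bookkeeping or faithfulness of translation is needed. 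Taking cohomology of the first isomorphism and applying Theorem 5.1 at $\lambda-\mu$ gives the vanishing and identifies $H_{\text{c}}^{q}(U,\mathcal{O}_{\lambda}\mid_{U})$ with $\bigl[(J(Q,\lambda-\mu)\otimes F^{\mu})_{\Theta}\bigr]_{\text{min}}$; taking global sections of the second isomorphism (this is where the smoothness of $\overline{Q}$ enters, via the description $\mathcal{J}(Q,\lambda)\cong j_{+}j^{\ast}(\mathcal{O}_{\lambda}^{\text{alg}})$ of Proposition 4.2) identifies that module with $J(Q,\lambda)$, including the possibility $J(Q,\lambda)=0$, with no need for any a priori embedding at the singular parameter. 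If you want to salvage your write-up, reverse the direction of the tensoring exactly as above.
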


\begin{proof}
The proof is basically the same as (but simpler than) the proof in
\cite[Theorem 9.4]{bratten3} with the slight difference that we need to use
the description of the irreducible Harish-Chandra sheaf $\mathcal{J}%
(Q,\lambda)$ from Proposition 4.2 instead of the description for
$\mathcal{I}(Q,\lambda)$. We sketch some details to help the reader adapt the
notation here to the notation in \cite[Section 9]{bratten3}. From the theory
of highest weight modules, one knows there is an irreducible
finite-dimensional $G_{0}$-module $F^{\mu}$ which is irreducible as a
$\mathfrak{g}$-module and has a highest weight $\mu\in\mathfrak{h}^{\ast}$
sufficiently dominant that $\lambda-\mu$ is antidominant and regular. Observe
that $\mathcal{O}_{\lambda-\mu}$ is the sheaf of holomorphic sections of a
$G_{0}$-equivariant line bundle. Let $\Theta$ be the infinitesimal character
\[
\Theta=W\cdot\lambda\text{.}%
\]
If $M$ is a $\mathfrak{g}$-module (or if $\mathcal{M}$ is a sheaf of
$\mathfrak{g}$-modules) we let $M_{\Theta}$ (respectively $\mathcal{M}%
_{\Theta}$) denote the corresponding $Z(\mathfrak{g})$-eigenspace. Then, as in
the proof of \cite[Theorem 9.4]{bratten3}, we have the following natural
isomorphisms:
\[
(i)\text{ \ }\left(  \mathcal{O}_{\lambda-\mu}\mid_{U}\otimes F^{\mu}\right)
_{\Theta}\cong\mathcal{O}_{\lambda}\mid_{U}\text{ \ and}%
\]
\[
(ii)\ \ \left(  \mathcal{J}(Q,\lambda-\mu)\otimes F^{\mu}\right)  _{\Theta
}\cong\mathcal{J}(Q,\lambda)\text{.}%
\]
Taking sheaf cohomology, in the first case we obtain
\[
H_{\text{c}}^{p}(U,\mathcal{O}_{\lambda}\mid_{U})\cong\left(  H_{\text{c}}%
^{p}(U,\mathcal{O}_{\lambda-\mu}\mid_{U})\otimes F^{\mu}\right)  _{\Theta}%
\]
which implies that the compactly supported sheaf cohomology groups
$H_{\text{c}}^{p}(U,\mathcal{O}_{\lambda}\mid_{U})$ vanish except when $p=q$
in which case $H_{\text{c}}^{q}(U,\mathcal{O}_{\lambda}\mid_{U})$ is the
minimal globalization of a Harish-Chandra module. To see which Harish-Chandra
module, we begin with the natural isomorphism from the previous section
\[
\left(  J_{\lambda-\mu}\right)  _{\text{min}}\cong H_{\text{c}}^{q}%
(U,\mathcal{O}_{\lambda-\mu}\mid_{U}).
\]
Therefore we obtain the isomorphism
\[
\left[  \left(  J_{\lambda-\mu}\otimes F^{\mu}\right)  _{\Theta}\right]
_{\text{min}}\cong\left(  H_{\text{c}}^{q}(U,\mathcal{O}_{\lambda-\mu}\mid
_{U})\otimes F^{\mu}\right)  _{\Theta}\cong H_{\text{c}}^{q}(U,\mathcal{O}%
_{\lambda}\mid_{U})\text{.}%
\]
Finally, taking global sections for the isomorphism in $(ii)$ we obtain
\[
\left(  J_{\lambda-\mu}\otimes F^{\mu}\right)  _{\Theta}\cong J_{\lambda}%
\]
which completes the proof of the Lemma.
\end{proof}

\begin{corollary}
Let $S\subseteq X$ be a $G_{0}$-orbit with vanishing number $q$ and let
$U\supseteq S$ be the smallest $G_{0}$-invariant open submanifold that
contains $S$. Let $Q$ be the $K$-orbit that is Matsuki dual to $S$ and suppose
the associated variety $\overline{Q}$ is smooth. Let $\mathcal{O}_{\lambda}$
be the sheaf of holomorphic sections of a $G_{0}$-equivariant line bundle and
suppose
\[
H_{\text{c}}^{q}(S,\mathcal{O}_{\lambda}\mid_{S})
\]
is a classifying module. Then the sheaf cohomology groups
\[
H_{\text{c}}^{p}(U,\mathcal{O}_{\lambda}\mid_{U})
\]
vanish except in degree $q$ in which case $H_{\text{c}}^{q}(U,\mathcal{O}%
_{\lambda}\mid_{U})$ is the unique irreducible submodule of the standard
module $H_{\text{c}}^{q}(S,\mathcal{O}_{\lambda}\mid_{S})$.
\end{corollary}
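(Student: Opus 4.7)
The plan is to deduce the corollary directly from Lemma 6.1 together with the structural properties of classifying modules and the exactness of the minimal globalization functor. The hypothesis that $H_{\text{c}}^{q}(S,\mathcal{O}_{\lambda}\mid_{S})$ is a classifying module is, by the definition given at the start of Section 6, equivalent to saying that $\lambda$ is antidominant and $J(Q,\lambda)\neq 0$. Thus the hypotheses of Lemma 6.1 are satisfied.

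First I would apply Lemma 6.1 to conclude immediately that $H_{\text{c}}^{p}(U,\mathcal{O}_{\lambda}\mid_{U})=0$ for $p\neq q$ and that
\[
H_{\text{c}}^{q}(U,\mathcal{O}_{\lambda}\mid_{U}) \;\cong\; J(Q,\lambda)_{\text{min}},
\]
the minimal globalization of $J(Q,\lambda)$. This takes care of the vanishing assertion and identifies the cohomology in degree $q$ as a specific minimal globalization. What remains is to identify this minimal globalization with the unique irreducible submodule of the standard module $H_{\text{c}}^{q}(S,\mathcal{O}_{\lambda}\mid_{S})$.

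Next I would invoke the fact (recorded in the discussion following the definition of classifying module in Section 4) that when $J(Q,\lambda)\neq 0$ it sits as the unique irreducible Harish-Chandra submodule of the standard Beilinson-Bernstein module $I(Q,\lambda)$. Since the minimal globalization functor is an exact, fully faithful functor from the category of Harish-Chandra modules to the category of admissible smooth representations, the inclusion $J(Q,\lambda)\hookrightarrow I(Q,\lambda)$ passes to an inclusion
\[
J(Q,\lambda)_{\text{min}} \;\hookrightarrow\; I(Q,\lambda)_{\text{min}} \;\cong\; H_{\text{c}}^{q}(S,\mathcal{O}_{\lambda}\mid_{S}),
\]
and this inclusion realizes $J(Q,\lambda)_{\text{min}}$ as the unique irreducible submodule of $H_{\text{c}}^{q}(S,\mathcal{O}_{\lambda}\mid_{S})$ (uniqueness on the globalization side follows because passage to underlying Harish-Chandra modules is also exact and faithful on the relevant subcategories). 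Finally, to see that the copy of $J(Q,\lambda)_{\text{min}}$ produced by Lemma 6.1 agrees with the submodule coming from Proposition 4.1, I would simply note that the natural morphism $H_{\text{c}}^{q}(U,\mathcal{O}_{\lambda}\mid_{U})\to H_{\text{c}}^{q}(S,\mathcal{O}_{\lambda}\mid_{S})$ of Corollary 3.3 is nonzero on the (unique) irreducible submodule, so its image is exactly that submodule.

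Since every step is either quoted from Lemma 6.1, from the definition of classifying module, or from standard properties of the minimal globalization functor, there is no real obstacle here; the only point one must be careful about is checking that the inclusion of Corollary 3.3 really does recover the same embedding as the one produced by applying the minimal globalization functor to $J(Q,\lambda)\hookrightarrow I(Q,\lambda)$. This is handled by the naturality and nonvanishing observations above.
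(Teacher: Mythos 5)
Your proposal is correct and follows essentially the same route as the paper: the paper treats the corollary as an immediate consequence of Lemma 6.1 together with the fact (stated at the start of Section 6.1) that for $\lambda$ antidominant with $J(Q,\lambda)\neq 0$ the module $J(Q,\lambda)$ is the unique irreducible submodule of $I(Q,\lambda)$, whose minimal globalization is $H_{\text{c}}^{q}(S,\mathcal{O}_{\lambda}\mid_{S})$. Your extra check that the embedding of Corollary 3.3 realizes the same submodule is a reasonable added precision but not a departure from the paper's argument.
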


\subsection{Maximal parabolic subgroups of complex reductive groups}

Suppose $G_{0}$ is a connected, complex reductive group. It turns out that the
representation we are studying has a close relationship to the classical
parabolic induction when the parabolic under consideration is maximal. This
allowed us to consider some examples (with the help of D. Vogan and A. Paul)
to see how the representation works when the associated algebraic variety is
singular. In the examples we considered the representation is irreducible only
when the associated algebraic variety is nonsingular.

In particular, let $H_{0}\subseteq G_{0}$ be a $\theta$-stable Cartan subgroup
($\theta$ is the complex conjugation of $G_{0}$ with respect to a compact real
form) and let $B_{0}\supseteq H_{0}$ be a Borel subgroup. We consider a
maximal, proper parabolic subgroup $P_{0}$ of $G_{0}$ that contains $B_{0}$.
These are determined in the following way. Let $W(G_{0})$ be the Weyl group of
$H_{0}$ (we can think of $W(G_{0})$ as the quotient of the normalizer of
$H_{0}$ in $G_{0}$ over $H_{0}$). Then $W(G_{0})$ acts naturally on the set of
Borel subgroups that contain $H_{0}$. As in the introduction, we let $X_{0}$
be the complex flag manifold of Borel subgroups of $G_{0}$ and let
$X_{0}^{\text{c}}$ be the conjugate complex manifold. Then the flag manifold
$X$ of Borel subalgebras of the complexified Lie algebra $\mathfrak{g}$ of
$\mathfrak{g}_{0}$ can be identified with the direct product
\[
X=X_{0}\times X_{0}^{\text{c}}.
\]
We have the two actions of $G_{0}$ on $X$. The diagonal action
\[
g\cdot(x,y)=(gx,gy)
\]
corresponding to the fact that $G_{0}$ is a real group with real Lie algebra
$\mathfrak{g}_{0}$ and the action
\[
g\cdot(x,y)=(gx,\theta(g)y)
\]
that corresponds to the action of $G_{0}=K$ as the complexification of $K_{0}%
$. As before let $B_{0}^{\text{op}}$ be the Borel subgroup opposite to $B_{0}$
(this subgroup corresponds to the longest element in $W(G_{0})$). Then each
$G_{0}$-orbit and each $K$-orbit on $X$ contains exactly one special point of
the form
\[
\left(  w\cdot B_{0},B_{0}^{\text{op}}\right)  \in X_{0}\times X_{0}%
^{\text{c}}%
\]
so we can identify $G_{0}$-orbits and $K$-orbits with elements of $W(G_{0})$.
One knows that $Q_{w_{1}}\subseteq\overline{Q_{w_{2}}}$ if and only
$w_{1}\preceq w_{2}$ in the Bruhat order $\preceq$ on $W(G_{0})$. In
particular, the Bruhat interval
\[
\left[  1,w\right]  =\left\{  u\in W(G_{0}):u\preceq w\right\}
\]
characterizes the $K$-orbits, $Q_{u}$, contained in $\overline{Q_{w}}$ as well
as the $G_{0}$-orbits, $S_{u}$, contained in the the smallest $G_{0}%
$-invariant open submanifold $U_{w}$ that contains $S_{w}$. Let $n$ be the
number of simple reflections in $W(G_{0})$. Observe that the number of $G_{0}%
$-orbits with vanishing number $1$ is exactly $n$ (the closed $G_{0}$-orbit is
the unique orbit with vanishing number $0$).

Let $Y_{0}$ be the generalized complex flag space of $G_{0}$-conjugates to
$P_{0}$ and let $Y_{0}^{\text{c}}$ be the complex manifold conjugate to
$Y_{0}$. Consider the generalized flag space
\[
Y=Y_{0}\times Y_{0}^{\text{c}}%
\]
and let $C$ be the $G_{0}$-orbit of $y=\left(  P_{0},P_{0}\right)  \in Y$.
Then $C$ is closed in $Y$ and the $G_{0}$-orbit of $y$ is a real form in $Y$.
Let
\[
\pi:X\rightarrow Y
\]
denote the equivariant projection. Then $\pi^{-1}(C)$ is a closed $G_{0}%
$-invariant submanifold and, since $P_{0}$ is a maximal, it contains exactly
$n-1$ orbits with vanishing number $1$. In particular, if $L_{0}\subseteq
P_{0}$ is the Levi factor of $P_{0}$ that contains $H_{0}$ then these $n-1$
orbits correspond to the simple reflections of the Weyl group $W(L_{0})$ of
$H_{0}$ in $L_{0}$ and the orbits in $\pi^{-1}(C)$ correspond to the elements
in $W(L_{0})$. Indeed, by intersection with the fiber, these $G_{0}$-orbits
give the $L_{0}$-orbits in the complex flag manifold
\[
X_{y}=\pi^{-1}(\left\{  y\right\}  )
\]
for $L_{0}$. Let $S$ be the remaining $G_{0}$-orbit with vanishing number $1$
and let
\[
U=X-\pi^{-1}(C)\text{.}%
\]
Then $U$ is the smallest $G_{0}$-invariant open set that contains $S$ (to see
this fact, since $U$ is open and contains $S$, is sufficient to check that $S$
is the unique $G_{0}$-orbit that is closed in $U$).

Let $\mathcal{O}_{\lambda}$ be the sheaf of holomorphic sections of a $G_{0}%
$-equivariant line bundle on $X$ and assume $\lambda$ is antidominant and
regular. In a natural way, the sheaf $\mathcal{O}_{\lambda}$ determines a
corresponding sheaf of holomorphic sections $\mathcal{O}_{X_{y},\lambda}$ for
an $L_{0}$-equivariant line bundle defined on $X_{y}$. Let
\[
F=\Gamma(X_{y},\mathcal{O}_{X_{y},\lambda})
\]
be the corresponding irreducible finite-dimensional representation for $L_{0}$
with lowest weight $\lambda+\rho$. In a unique way, this representation
extends to an irreducible representation
\[
\omega:P_{0}\rightarrow GL(F).
\]
We consider the corresponding classical (un-normalized) parabolic induction
$I_{P_{0}}^{G_{0}}(F)$ given by
\[
I_{P_{0}}^{G_{0}}(F)=\left\{  \text{real analytic functions }\varphi
:G_{0}\rightarrow F:\varphi(gp)=\omega(p^{-1})\varphi(g)\right\}  \text{.}%
\]
Then (for example see \cite{bratten2}) we have a natural isomorphism of
$G_{0}$-modules
\[
\Gamma(\pi^{-1}(C),\mathcal{O}_{\lambda})\cong I_{P_{0}}^{G_{0}}(F)\text{
\ and }H^{p}(\pi^{-1}(C),\mathcal{O}_{\lambda})=0\text{ for }p>0
\]
where we obtain the vanishing by the Leray spectral sequence and the fact that
the sheaf cohomology groups of a real analytic vector bundle over a real
analytic manifold vanish in positive degree.

Now consider the short exact sequence of sheaves
\[
0\rightarrow\left(  \mathcal{O}_{\lambda}\mid_{U}\right)  ^{X}\rightarrow
\mathcal{O}_{\lambda}\rightarrow\left(  \mathcal{O}_{\lambda}\mid_{\pi
^{-1}(C)}\right)  ^{X}\rightarrow0.
\]
Thus we have the following short exact sequence of representations
\[
0\rightarrow V\rightarrow I_{P_{0}}^{G_{0}}(F)\rightarrow H_{\text{c}}%
^{1}(U,\mathcal{O}_{\lambda}\mid_{U})\rightarrow0
\]
where $V=\Gamma(X,$ $\mathcal{O}_{\lambda})$ is the corresponding irreducible
finite-dimensional $G_{0}$-module. Thus the minimal globalization
$H_{\text{c}}^{1}(U,\mathcal{O}_{\lambda}\mid_{U})$ is irreducible if and only
if the quotient
\[
I_{P_{0}}^{G_{0}}(F)/V
\]
is irreducible.

Let $Q$ be the $K$-orbit Matsuki dual to $S$. Then one would like to know when
$\overline{Q}$ is smooth. We did the calculation (which is not difficult) for
$GL(n+1,\mathbb{C})$ and it works like this. The Levi factor of $P_{0}$ is
characterized by a partition
\[
n_{1}+n_{2}=n+1
\]
where
\[
L_{0}=GL(n_{1},\mathbb{C})\times GL(n_{2},\mathbb{C})\subseteq
GL(n+1,\mathbb{C})\text{.}%
\]
It turns out that $\overline{Q}$ is smooth if and only $n_{1}$ and $n_{2}$
belong to $\left\{  n,1\right\}  $. Therefore $I_{P_{0}}^{G_{0}}(F)/V$ is
irreducible in this case. At this point we contacted D. Vogan to see what was
known about the composition factors of these principal series (we asked about
the case when $V=\mathbb{C}$ is the trivial $G_{0}$-module). After doing a
calculation he guessed that there are
\[
\min\left\{  n_{1},n_{2}\right\}
\]
composition factors occurring in the representation
\[
I_{P_{0}}^{G_{0}}(\mathbb{C})/\mathbb{C}\text{.}%
\]
Vogan passed this on to Annegret Paul who confirmed the guess for some low
dimensional examples by using a computer program (apparently the group
$GL(6,\mathbb{C})$ is already a difficult calculation for the algorithms that
were used).

Hence, for these examples, the representation $H_{\text{c}}^{1}(U,\mathcal{O}%
_{\lambda}\mid_{U})$ is irreducible if and only if the associated algebraic
variety is smooth.

\section{Serre duality}

Since the resolutions used in the Hecht-Taylor construction of the derived
category of DNF sheaves of $\mathcal{D}_{\lambda}$-modules are Cech
resolutions, perhaps it is worth mentioning that it is not difficult to
establish the validity of Serre duality using these sorts of resolutions
\cite[Section 10]{bratten3}. In particular, let $n$ be the complex dimension
of $X$ and let $\Omega^{n}$ be the canonical bundle on $X$. Thus, for $x\in
X$, the geometric fiber $T_{x}(\Omega^{n})$ of $\Omega^{n}$ at $x$ is given
by
\[
T_{x}(\Omega^{n})=\wedge^{n}\mathfrak{n}_{x}%
\]
as a $G_{0}\left[  x\right]  $-module (recall that $G_{0}\left[  x\right]  $
denotes the stabilizer of $x$ in $G_{0}$). In particular, $\Omega^{n}$ is a
$G_{0}$-equivariant holomorphic line bundle on $X$. Using the unshifted
notation from Section 3 of this paper, suppose $\mathcal{O}\left(  \mu\right)
$ is the sheaf of holomorphic sections of a $G_{0}$-equivariant line bundle on
$X$. Then the sheaf of holomorphic sections of the dual bundle is given by
$\mathcal{O}\left(  -\mu\right)  $ (i.e. the sheaf of sections of the line
bundle associated to the dual geometric fiber). If $U\subseteq X$ is any
$G_{0}$-invariant open submanifold of $X$ then Serre duality gives a natural
isomorphism of topological $G_{0}$-modules
\[
H_{\text{c}}^{p}(U,\mathcal{O}\left(  \mu\right)  \mid_{U})^{\prime}\cong
H^{n-p}\left(  U,\mathcal{O}\left(  -\mu\right)  \otimes\mathcal{O}\left(
\Omega^{n}\right)  \mid_{U}\right)
\]
where $H_{c}^{p}(U,\mathcal{O}\left(  \mu\right)  \mid_{U})^{\prime}$ denotes
the continuous dual of the topological $G_{0}$-module $H_{c}^{p}%
(U,\mathcal{O}\left(  \mu\right)  \mid_{U})$ and $\mathcal{O}\left(
\Omega^{n}\right)  $ is the sheaf of holomorphic sections of the canonical
bundle. In terms of the shifted $\mathcal{D}$-module parameters $\lambda
\in\mathfrak{h}^{\ast}$ we obtain
\[
H_{\text{c}}^{p}(U,\mathcal{O}_{\lambda}\mid_{U})^{\prime}\cong H^{n-p}\left(
U,\mathcal{O}_{-\lambda}\mid_{U}\right)
\]
for each $p$. In particular, the sheaf cohomology groups of a $G_{0}%
$-equivariant holomorphic line bundle on a $G_{0}$-invariant open submanifold
are maximal globalizations of Harish-Chandra modules.

\end{document}